\documentclass[a4paper,11pt]{article} 

\usepackage{amsmath,amssymb,color} 
\usepackage{latexsym} 
\usepackage{graphicx}

\newtheorem{theorem}{Theorem} 
\newtheorem{lemma}[theorem]{Lemma} 
\newtheorem{proposition}[theorem]{Proposition}
 
\newtheorem{conjecture}[theorem]{Conjecture}

\newcommand{\qed}{~$\Box $} 

\date{\empty}

\begin{document} 

\title{Odd Edge Colorings of Graphs with Odd Order} 

\author{
 Mikio Kano$^1$
\footnote{mikio.kano.math@vc.ibaraki.ac.jp} 
 Shun-ichi Maezawa$^2$
\footnote{maezawa.mw@gmail.com} 
 Kenta Ozeki$^3$
\footnote{ozeki-kenta-xr@ynu.ac.jp} 
\and
$^{1}$ Ibaraki University, Hitachi, Ibaraki, Japan
\and
$^{2}$  Nihon University, Setagaya-ku, Tokyo, Japan 
\and
$^3$ Yokohama National University, Yokohama, Kanagawa, Japan
} 

\maketitle

\begin{abstract}
An {\em odd subgraph} of  a graph is a subgraph in which every vertex has odd degree.
A graph $G$ is said to be {\em odd $k$-edge-colorable} if there exists an edge-coloring $E(G) \rightarrow \{1,2, \ldots, k\}$ such that
each non-empty color class induces an odd subgraph of $G$.
The {\em odd chromatic index} of $G$, denoted by $\chi'_o(G)$, is the minimum $k$ for which $G$ is odd $k$-edge-colorable.
In this paper, 
we prove that every $4$-connected simple graph of odd order is odd 3-edge-colorable,
and show that the $4$-connectedness assumption is necessary.
We also prove that 
for a connected Eulerian graph $G$ of odd order,
there exists an edge $e$ such that $G-e$ is odd $2$-edge-colorable.

\end{abstract} 

\section{Introduction} 

In this paper, we deal with multigraphs and simple graphs, where a multigraph may have multiple edges but has no loops and a simple graph has neither loops nor multiple edges. However, we mainly consider simple graphs, and so we briefly call a simple graph a {\em graph}.
Let $G$ be a multigraph. For a vertex $v$ of $G$, let $N_G(v)$ and $\deg_G(v)$ denote the {\em neighborhood} and the {\em degree} of $v$ in $G$, respectively.
An {\em odd subgraph} of $G$ is a subgraph in which every vertex has odd degree, and
a spanning odd subgraph is called an {\em odd factor} of $G$.

We say that $G$ is {\em odd $k$-edge-colorable} if there exists an edge-coloring $E(G) \rightarrow \{1,2, \ldots, k\}$ such that each non-empty color class induces an odd subgraph of $G$.
The {\em odd chromatic index} of $G$, denoted by $\chi'_o(G)$, is the minimum $k$ for which $G$ is odd $k$-edge-colorable.
Odd edge-coloring has been studied for a long time. In 1991, Pyber proved that every tree is odd $2$-edge-colorable, and
this result together with a well-known fact that every connected multigraph of even order has an odd factor (Theorem~6.7 of \cite{AK-2011}) implies Theorem~\ref{even}.

\begin{theorem}[Pyber~\cite{Pyber-1991}]\label{even}
Every connected graph $G$ of even order satisfies $\chi'_o(G) \leq 3$.
\end{theorem}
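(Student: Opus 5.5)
The plan is to combine the two ingredients named just before the statement. Since $G$ is connected of even order, Theorem~6.7 of \cite{AK-2011} gives an odd factor $F$ of $G$. We color every edge of $F$ with color $3$, and then color the remaining graph $H = G - E(F)$ with colors $1$ and $2$. The color class $3$ is then $F$ itself, which is an odd subgraph. So it suffices to show that the edges of $H$ can be split into two classes, each inducing an odd subgraph (or being empty).

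We cannot apply Pyber's tree result to $H$ directly, because $H$ need not be a forest. To fix this, we choose $F$ more carefully. Among all odd factors of $G$, take one with the maximum number of edges. Alternatively, take a spanning tree $T$ of $G$ and let $F$ be the unique spanning subgraph of $T$ in which every vertex has odd degree; this is the classical construction behind the even-order odd factor theorem. The natural choice is to absorb cycles into $F$. If $C$ is any cycle of $H$, then the symmetric difference $F \triangle C = F \cup C$ is still an odd factor, since adding a cycle changes every degree by $0$ or $2$. It also has strictly more edges than $F$. Hence a maximum odd factor $F$ leaves $H = G - E(F)$ acyclic, that is, a forest.

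Now $H$ is a forest. By Pyber's theorem that every tree is odd $2$-edge-colorable, applied to each component of $H$ that has at least one edge, we color $E(H)$ with colors $1$ and $2$ so that each color class is an odd subgraph. Isolated vertices of $H$ carry no edges and cause no problem, because a color class only needs odd degree at the vertices it actually contains. Together with color $3$ on $F$, this gives an odd $3$-edge-coloring of $G$, so $\chi'_o(G)\le 3$.

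The main obstacle is ensuring that $H$ is a forest. This is handled by the maximality argument, which uses only the fact that $F\triangle C$ preserves the parity of every degree when $C$ is a cycle edge-disjoint from $F$. The remaining point to check is that Pyber's tree result is stated for trees with at least one edge, so it should be applied componentwise.
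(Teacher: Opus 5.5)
Your proof is correct and follows the route the paper indicates: an odd factor with the maximum number of edges leaves a forest, because absorbing any cycle of $G-E(F)$ would give a larger odd factor, and Pyber's tree result then colors that forest with two more colors. Your maximality step supplies the detail the paper leaves implicit. One caution: the ``alternative'' of taking the odd factor of a spanning tree does not by itself make $G-E(F)$ a forest, so drop that aside and rely only on the maximality choice.
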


By the same way, we can show that Theorem \ref{even} holds even for multigraphs.
Pyber also obtained a result on the odd chromatic index of a graph of odd order.
Later Petru\v{s}evski extended this to multigraph as follows.

\begin{theorem}[Petru\v{s}evski \cite{Petrusevski-2018}]\label{thm-P}
Every connected multigraph $G$ of order at least $4$ satisfies $\chi'_o(G) \leq 4$.
\end{theorem}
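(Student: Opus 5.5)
The plan is to reduce to the even-order case, which is already settled by Theorem~\ref{even} (valid also for multigraphs, as remarked above). If $|V(G)|$ is even, Theorem~\ref{even} gives $\chi'_o(G)\le 3\le 4$ and we are done. So the substance is the case where $|V(G)|$ is odd, hence $|V(G)|\ge 5$. The whole difficulty is how to use a single extra colour to absorb the parity defect created by one vertex.

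First choose a vertex $v$ that is not a cut vertex of $G$, for instance a leaf of a spanning tree. Then $G-v$ is a connected multigraph of even order at least $4$, and Theorem~\ref{even} gives an odd $3$-edge-colouring $c$ of $G-v$ with colours $1,2,3$. It remains to colour the edges incident with $v$. For each neighbour $w$ of $v$, let $m(w)$ be the multiplicity of $vw$. I would put an odd number of the $vw$ edges into colour $4$: all of them if $m(w)$ is odd, and all but one if $m(w)$ is even. Every neighbour $w$ then has odd degree in colour $4$.

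Two issues remain. The first is the degree of $v$ in colour $4$, which is congruent to $|N_G(v)|$ modulo $2$. The second is the leftover edges $vw$, one for each $w$ with $m(w)$ even. A leftover edge $vw$ must get a colour $i\in\{1,2,3\}$. This flips the parity of $w$ in colour $i$, so it must be compensated. To do so, I would modify the colour-$i$ class of $G-v$ by the symmetric difference with a $T$-join in $G-v$. Here $T$ is the set of vertices whose colour-$i$ degree became even, together with the requirement that $v$ ends up with odd colour-$i$ degree or degree $0$. The point is to find these joins without destroying the other classes. To avoid this interaction, I would not fix $c$ in advance. Instead, I would rerun the argument behind Theorem~\ref{even} on $G-v$ with prescribed parities: take a spanning tree of $G-v$, and for any even-size set $T\subseteq V(G-v)$ find a subgraph whose odd-degree vertices are exactly $T$, which always exists in a connected graph. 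Then colour it with two colours in Pyber's fashion, and let the third class be an odd factor.

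Equivalently, I could fold all the parity requirements into one auxiliary graph. Form $G'$ from $G$ by choosing a neighbour $u$ of $v$ and identifying $v$ with $u$, deleting the $uv$ edges, which become colour-$4$ edges. $G'$ has even order and is connected, so it has an odd $3$-colouring. Then split the merged vertex back into $u$ and $v$, and repair the colours in which exactly one of $u,v$ has even degree by moving such a colour's edges at the split vertex, or by recolouring them to $4$.

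I expect the main obstacle to be exactly this parity bookkeeping at $v$ and its neighbours. Colour $4$ must be an odd subgraph, so every vertex it touches needs odd colour-$4$ degree, and simultaneously each of colours $1,2,3$ must remain odd at every vertex it meets. I would first try to choose $v$, and the neighbour $u$ in the contraction version, so that the number of neighbours of $v$ is odd, or so that some neighbour has odd multiplicity. This is where the hypothesis of order at least $4$ should enter, by giving enough freedom to pick such a $v$. If no such choice exists, I would handle that residual configuration by a direct small case analysis.
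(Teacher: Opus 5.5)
\textbf{There is a genuine gap: the proposal stops exactly where the substance of the theorem begins.} The paper gives no proof of this statement (it is quoted from Petru\v{s}evski), so your proposal has to stand on its own. After the routine reduction to odd order, the two ``remaining issues'' you list are the entire content of the theorem, and none of the mechanisms you offer resolves them.

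\emph{The residual case is not small.} In your scheme $v$ has colour-$4$ degree $\equiv |N_G(v)| \pmod 2$ whatever the multiplicities are, so a neighbour of odd multiplicity does not help. No vertex has an odd number of neighbours in $K_n$, in the $n$-cycle with every edge doubled ($n$ odd), or more generally whenever the underlying simple graph is Eulerian. So the ``residual configuration'' is an infinite family, not a small case analysis.

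\emph{The repair of leftover edges fails as described.} Replacing class $i$ by its symmetric difference with a $T$-join moves edges into or out of the other classes, flipping their parities at both ends of every moved edge. It can also create positive even degree in class $i$ at vertices where that class was absent. Rerunning Pyber's argument does not give prescribable parities either. The forest is the complement of the odd factor, so its parities are forced. The odd-factor class cannot take any leftover edge at all: if it is odd at every vertex of the even-order graph $G-v$ outside the set $W$ of neighbours whose leftover edge it receives, the handshake lemma makes $|W|$ even, and then $v$ has even positive degree $|W|$ in that class. In the contraction variant, edges at the split vertex have fixed ends and cannot be moved. Recolouring an edge $ux$ to $4$ breaks the parity of $x$ in colour $i$ and creates an uncontrolled colour-$4$ edge at $x$. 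Also, if $uv$ has even multiplicity, colour $4$ already fails at $u$ and $v$.

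\emph{The order hypothesis is never actually used.} The examples above refute your hope that $|V(G)|\ge 4$ lets you pick a good $v$, yet the hypothesis is essential. In the triangle with all three edges doubled, every nonempty odd subgraph has exactly two vertices of positive degree and is therefore a single edge, so six colours are needed. Your scheme stalls there exactly as you describe ($|N_G(v)|=2$, two leftover edges).

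\emph{What can be salvaged.} When all multiplicities at $v$ are odd (in particular for simple graphs), your idea can be finished with Theorem~\ref{thm-bridge}:
\begin{itemize}
\item If $|N_G(v)|$ is odd, give colour $4$ to all edges at $v$ and apply Theorem~\ref{even} to $G-v$.
\item Otherwise choose a neighbour $w$ and give colour $4$ to all edges at $v$ not going to $w$. Odd $3$-colour $(G-v)+vw$, which has odd order and the bridge $vw$. Then give every parallel copy of $vw$ the colour of that single edge; since the multiplicity is odd, parities at $v$ and $w$ are preserved.
\end{itemize}
This works at every order, so the order hypothesis can only matter when every admissible $v$ has a neighbour of even multiplicity. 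There, several leftover edges at $v$ generally remain and no bridge exists. That case is where the real work of Petru\v{s}evski's proof lies, and your proposal contains no argument for it.
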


The upper bound of $\chi'_o(G)$ in Theorem~\ref{thm-P} is tight;
that is, there are infinitely many graphs that are not odd $3$-edge-colorable
(see below).
Motivated by this fact, many researchers have studied sufficient conditions under which a graph $G$ satisfies $\chi'_o(G) \le 3$.
Petru\v{s}evski \cite[Proposition 2.5]{Petrusevski-2018} proved the following theorem.

\begin{theorem}[Petru\v{s}evski \cite{Petrusevski-2018}]
Let $G$ be a connected multigraph of odd order.
If $G$ contains a bridge, then $\chi'_o(G) \leq 3$.
\label{thm-bridge}
\end{theorem}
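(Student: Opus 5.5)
Let $e=uv$ be a bridge of $G$, and let $G_u$ and $G_v$ be the two components of $G-e$, with $u\in V(G_u)$ and $v\in V(G_v)$. Since $|V(G)|=|V(G_u)|+|V(G_v)|$ is odd, exactly one of the two sides has even order; by symmetry assume $|V(G_u)|$ is even and $|V(G_v)|$ is odd. The plan is to color the even side using Theorem~\ref{even} (in its multigraph version), and to handle the odd side together with the bridge by finding an odd factor of $G-E(G_u)$ that contains $e$.

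For the even side, $G_u$ is a connected multigraph of even order, so it has an odd 3-edge-coloring with classes $C_1,C_2,C_3$. The degenerate case $|V(G_u)|=0$ cannot occur, since $u\in G_u$. Moreover $|V(G_u)|\ge 2$ because its order is even, so $G_u$ has at least one edge.

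For the odd side, consider $H=G_v+e$, the subgraph induced by $V(G_v)\cup\{u\}$. It is connected and has even order, so by Theorem~6.7 of \cite{AK-2011} it has an odd factor $F$. In $H$ the vertex $u$ has degree $1$, so $F$ must contain $e$. The remaining edges $E(H)\setminus E(F)$ form a subgraph $R$ of $G_v$ in which every vertex of $G_v$ has degree $\deg_{H}(x)-\deg_F(x)$. These parities are not controlled, so a direct 2-coloring of $H$ is not available in general. Instead, refine the argument: apply Pyber's tree result. Take a spanning tree $T$ of $H$; it contains $e$. Use the standard $T$-join argument to choose $F$ as an odd factor of $H$ such that $H-E(F)$ is a forest. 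This works because an odd factor can be taken inside $T$ modulo non-tree edges: fix the non-tree edges in one class and choose a $T$-join correcting parities. By Pyber every forest is odd 2-edge-colorable, so $H-E(F)$ is odd 2-edge-colorable with colors, say, $2,3$, while $F$ gets color $1$.

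Now merge the two colorings, renaming the colors on $G_u$ so that the color classes still form odd subgraphs. The only vertex shared by $G_u$ and $H$ is $u$, and on the $H$ side $u$ is incident only to $e$, which has color $1$. Hence $u$ has degree $1$ in color $1$ on the $H$ side. If $u$ also has odd degree in class $C_1$ of $G_u$, merging gives $u$ an even degree in color $1$, which is bad. To fix this, permute the colors of $G_u$ so that the color matching color $1$ is one in which $u$ has degree $0$ in $G_u$. This is possible unless $u$ lies in all three classes $C_1,C_2,C_3$. I expect that case to be the main obstacle. There I would instead choose the odd factor of $G_u$ in the proof of Theorem~\ref{even} so that $u$ misses one class, for instance by ensuring that the tree part is 2-colored with $u$ a leaf or with $u$ of degree one in the complement. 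Failing that, I would exploit the freedom to take $F$ avoiding structure near $v$, with $e$ alone at $u$, and recombine.
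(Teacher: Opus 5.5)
Your setup is sound up to the merge at $u$. The $T$-join argument does give an odd factor $F\ni e$ of $H=G_v+e$ such that $H-E(F)$ is a forest in which $u$ is isolated. You also correctly see that the merge needs a class on the $G_u$ side in which $u$ has even degree.

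\textbf{The gap.} The case you defer is not a technicality. Because you insist on an odd $3$-edge-coloring of $G_u$ (via Theorem~\ref{even}), $u$ must be \emph{absent} from one class, and that can be impossible. Your proposed repair cannot work in general. If $u$ lies in all three classes, then $\deg_{G_u}(u)$ is odd, so $u$ has even degree in $G_u-E(F_u)$ and can never be a leaf of that forest. It would have to be isolated there, i.e.\ $E_{G_u}(u)\subseteq E(F_u)$, and parity can forbid this when $G_u-u$ is disconnected.

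\textbf{A concrete failure.} Let $aa'$ be an edge of $W_4$. Let $G_u$ consist of $W_4-aa'$, a new vertex $u$ adjacent to $a$ and $a'$, and a path $ubb'$. Let $G=G_u+uv$ with a new pendant vertex $v$, and take $e=uv$. Then $|V(G_u)|=8$ and $\deg_{G_u}(u)=3$. An odd $3$-edge-coloring of $G_u$ missing a color at $u$ would put $ua,ua',ub$ in one class. Replacing $ua,ua'$ by $aa'$ in that class would then give an odd $3$-edge-coloring of $W_4$, contradicting $\chi'_o(W_4)=4$ (Theorem~\ref{hanrei}). Here $H$ is the single edge $uv$, so there is no freedom on that side either. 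For this bridge your scheme fails for every choice of colorings.

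\textbf{The missing idea.} Do not require the decomposition of $G_u$ to be an odd coloring of $G_u$. Allow $u$ to have even, possibly positive, degree in the class that is matched with the color of $e$.
\begin{itemize}
\item Take an odd factor $F_u$ of $G_u$ with $G_u-E(F_u)$ a forest, by the same $T$-join argument.
\item Since $v\notin V(G_u)$, the graph $(G_u-E(F_u))+e$ is still a forest. By Pyber it splits into odd subgraphs $M_1\ni e$ and $M_2$.
\item Let $L_1,L_2$ be your odd $2$-edge-coloring of $H-E(F)$.
\item Use the three classes $F\cup(M_1-e)$, $F_u\cup L_1$ and $M_2\cup L_2$. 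They partition $E(G)$.
\end{itemize}
Every vertex other than $u$ meets edges from only one side. At $u$ the degrees are $1+(\deg_{M_1}(u)-1)=\deg_{M_1}(u)$, $\deg_{F_u}(u)$ and $\deg_{M_2}(u)$, since $u$ is isolated in $L_1$ and $L_2$. All three are odd or zero. This replaces the appeal to Theorem~\ref{even} on $G_u$ and removes the problematic case entirely.
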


Considering Theorem~\ref{thm-bridge}, it is natural to study the opposite side,
namely graphs with high connectivity, which are the main focus of this paper.

Pyber~\cite{Pyber-1991} pointed out that the wheel $W_4$,
the wheel with four spokes, satisfies $\chi'_o(W_4)=4$,
and asked whether there exist infinitely many connected graphs $G$ with $\chi'_o(G)=4$.
M\'{a}trai~\cite{Matrai-2006} answered this question affirmatively,
but all of the graphs he constructed contain a cut-vertex
and hence are not $2$-connected.
Later, Atanasov, Petru\v{s}evski, and \v{S}krekovski~\cite[Proposition~3.3]{APS-2016}
proved the following theorem,
which gives infinitely many $2$-connected graphs $G$ with $\chi'_o(G)=4$.
Note that a {\em cubic graph} is a 3-regular graph.

\begin{theorem}[Atanasov et al. \cite{APS-2016}]
Let $G$ be a multigraph obtained from a connected cubic bipartite multigraph by a single edge subdivision. Then $\chi'_o(G)=4$.
\label{thm-4}
\end{theorem}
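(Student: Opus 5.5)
The upper bound $\chi'_o(G)\le 4$ comes from Theorem~\ref{thm-P}. The lower bound $\chi'_o(G)\ge 4$, i.e.\ that no odd $3$-edge-coloring exists, will follow from a counting argument across the bipartition of the cubic graph, taken modulo $3$.

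\textbf{Setup and upper bound.} Let $H$ be the connected cubic bipartite multigraph with bipartition $(A,B)$. Let $G$ be obtained by subdividing an edge $uv$ ($u\in A$, $v\in B$) with a new vertex $w$. For the upper bound I invoke Theorem~\ref{thm-P}, which applies as soon as $|V(G)|\ge 4$. The only smaller case is when $H$ is the triple edge on two vertices, so that $G$ has order $3$. There I would exhibit an odd $4$-edge-coloring directly by giving each of the four edges its own color; every color class is then a single edge, which is an odd subgraph.

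\textbf{Local structure of a hypothetical $3$-coloring.} Suppose for contradiction that $G$ has an odd $3$-edge-coloring with colors $1,2,3$. Every vertex other than $w$ has degree $3$. Since $3$ splits into odd parts only as $3$ or $1+1+1$, each such vertex is either \emph{monochromatic} (all three edges have one color) or \emph{rainbow} (its three edges have distinct colors). The vertex $w$ has degree $2$, so its two edges must receive different colors; say $uw$ has color $1$ and $wv$ has color $2$.

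\textbf{Counting colored edges.} For a color $c$, let $m_{A,c}$ and $m_{B,c}$ be the numbers of monochromatic vertices of color $c$ in $A$ and in $B$, and let $r_A$ and $r_B$ be the numbers of rainbow vertices in $A$ and $B$. Every edge of $G$ other than $uw,wv$ joins $A$ to $B$. Let $e_c$ be the number of such edges with color $c$. Counting color-$c$ edge ends on each side gives
\[ 3m_{A,c}+r_A = e_c+[uw\text{ has color }c],\qquad 3m_{B,c}+r_B = e_c+[wv\text{ has color }c]. \]
Subtracting, and writing $\Delta=r_A-r_B$, we get
\[ 3(m_{A,c}-m_{B,c})+\Delta = [uw\text{ has color }c]-[wv\text{ has color }c]. \]

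\textbf{Contradiction modulo $3$.} For $c=3$ the right-hand side is $0$, so $\Delta\equiv 0\pmod 3$. For $c=1$ it is $1$, so $\Delta\equiv 1\pmod 3$, a contradiction. Hence $\chi'_o(G)\ge 4$, and together with the upper bound, $\chi'_o(G)=4$.

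The only delicate point is the local classification of colors at each vertex, in particular that $w$ forces two distinct colors. Once that is fixed, the mod-$3$ count is mechanical.
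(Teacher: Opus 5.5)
Your proof is correct. It uses the same idea the paper uses in its proof of Theorem~\ref{hanrei}, which the paper says follows the cited proof of this theorem (the paper itself only quotes Theorem~\ref{thm-4}): classify vertices as monochromatic or rainbow, double count colored edge-ends across $(A,B)$, and get a contradiction modulo $3$. The paper subtracts the equations for the two colors at $w$, whereas you compare one of them with the third color, which is a cosmetic difference; your separate handling of the order-$3$ case, where Theorem~\ref{thm-P} does not apply, is a correct and worthwhile detail.
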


However, graphs with higher connectivity have not been studied.
Considering this situation,
we prove the following theorem.

\begin{theorem}
Every $4$-connected graph $G$ of odd order satisfies $\chi'_o(G) \leq 3$.
\label{thm-4-conn}
\end{theorem}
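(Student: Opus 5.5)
Our plan is to reduce Theorem~\ref{thm-4-conn} to the even-order case, Theorem~\ref{even}. Choose a vertex $v$ of $G$, split off one color class locally around $v$, and then color the rest. Concretely, we look for a vertex $v$ and a set $F$ of an odd number of edges incident with $v$ such that $G-F$ still behaves like a connected graph of even order. The edges in $F$ form a star, which is an odd subgraph, and will receive one color. For the remaining two colors we need $G-F$ to have an odd factor on all vertices except those isolated by the removal, together with a complementary forest structure, as in Pyber's argument.

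The key steps, in order, are as follows.
\begin{enumerate}
\item Pyber's proof of Theorem~\ref{even} actually shows the following. If $H$ is a connected multigraph of even order, then $H$ has an odd factor $O$ such that $H-E(O)$ decomposes, via a spanning tree $T$, into two odd subgraphs. The standard route is to take a spanning tree $T$ and the unique odd factor $T'\subseteq T$ (a $T$-join); the edges outside $T'$ are then handled by Pyber's tree lemma. We will need a version in which one color may also be shared with an extra star.
\item Pick $v$ and let $G'=G-v$, which has even order and is $3$-connected. By Theorem~\ref{even}, $G'$ has an odd $3$-edge-coloring $c$. Then add back the edges $vu$, $u\in N_G(v)$, and recolor.
\item Adding $vu$ with color $i$ flips the parity of $u$ in class $i$ and adds $v$ to class $i$. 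We need each class $i$ used at $v$ to receive an odd number of edges at $v$. Also, every $u$ whose parity in class $i$ was flipped must be repaired. We repair by switching colors along paths (or cycles) in $G'$ that alternate appropriately, or, better, by choosing the odd factor of $G'$ in Pyber's construction with respect to a prescribed parity set. This is a $T$-join with $T$ chosen so that the vertices adjacent to $v$ via color-$1$ edges get even degree in the odd factor.
\end{enumerate}

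The cleanest formulation is the following. Choose an odd-size subset $S\subseteq N_G(v)$. Color the edges $vS$ with color $1$. Find a spanning subgraph $O$ of $G'$ in which vertices of $S$ have even degree (possibly $0$) and all other vertices have odd degree. Then $O+vS$ is an odd subgraph, provided vertices of $S$ of degree $0$ are acceptable; they are, since $vs$ gives them degree $1$. Such an $O$ exists in a connected graph exactly when $|V(G')\setminus S|$ is even, and $|S|$ odd means $|V(G')|-|S|$ is odd. This parity is wrong, so instead take $|S|$ even with $v$ colored by two classes, or include $v$ in $O$.

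The main obstacle is therefore step 3. We must arrange $S$ and the color-$1$ class so that the remaining edges $E(G)\setminus(\text{class }1)$ form a graph whose components each admit an odd $2$-edge-coloring. By Pyber, every forest is odd $2$-edge-colorable, and more generally so is every component of even order containing... The difficulty is that the complement may contain cycles and odd-order components. Here is what we would try. Take a spanning tree $T$ of $G$, and let the color-$1$ class be $E(G)\setminus E(T)$ together with parity corrections, so that what remains for colors $2,3$ is a forest, which Pyber's tree theorem colors. The $4$-connectivity, which gives minimum degree at least $4$ and many disjoint paths, should be used to choose $T$ and $v$ so that the correction set (a $T$-join within $T$) can be made. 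The aim is that the non-tree edges plus the $T$-join give every vertex odd degree, except possibly at one vertex, which is fixed by moving an odd star at $v$ into the forest side. Verifying that $4$-connectivity (rather than $3$) suffices to avoid the obstruction of Theorem~\ref{thm-4} is where we expect the real work to be.
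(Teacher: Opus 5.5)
Your proposal is not yet a proof. It stops exactly at the step you yourself identify as the real work, and the devices you suggest for that step do not close it.

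\textbf{Where your plan breaks.} The underlying obstruction is a parity count. In an odd $3$-edge-coloring of a graph of odd order, every color class has an even number of odd-degree vertices, and hence omits an odd number of vertices entirely. Your final scheme takes class $1$ to be the cotree edges plus a correcting join $J\subseteq E(T)$, and colors $T-J$ by Pyber's tree lemma. Class $1$ must then omit some vertex $z$, which forces $E_G(z)\subseteq E(T)\setminus J$. So $T$ must contain the whole star at $z$, and $J$ must be built separately inside each component of $T-z$. This imposes a parity condition on every such component: if $z$ is the only omitted vertex, each component must contain an even number of vertices of even cotree degree. This condition is the entire difficulty. It cannot be met for $W_4$ or for the graphs of Theorem~\ref{hanrei}, whatever $T$ and $z$ are, and you give no way of achieving it; in particular, $4$-connectivity is never actually used.

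Your proposed repair, moving an odd star at $v$ into the forest side, does not help. Each moved edge $vy$ changes the parity of $y$ in class $1$, creating a new defect at $y$ unless $y$ had class-$1$ degree exactly $1$. Moved cotree edges can also close cycles in $T-J$, after which the tree lemma no longer applies. Likewise, step~2 contains no actual recoloring procedure, and your ``cleanest formulation'' fails for the parity reason you observe.

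\textbf{What is missing.} Since the order is odd, $G$ has a vertex of even degree. By Theorems~\ref{thm-2} and~\ref{thm-32}, one may assume $G$ has a unique even-degree vertex $w$ and that $w$ has a non-neighbor $u$. These theorems apply because $G$ is $3$-connected, $G-w$ is connected, and $G\not\cong W_4$.

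The paper then makes $w$ the omitted vertex of one color class. It suffices to find a connected Eulerian subgraph $\widetilde F$ of even order containing $E_G(w)$. Then $\widetilde F$ has an odd factor $H_1$, and $\widetilde F-E(H_1)$ is again an odd factor since $\widetilde F$ is Eulerian. Moreover $G-E(\widetilde F)$ has odd degree at every vertex except $w$, where its degree is $0$.

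One takes $\widetilde F=F_i+E_G(w)$ for the $i$ with $|V(F_i)|$ odd, where $F_1,F_2\subseteq G-w$ satisfy statement $(*)$: their odd-degree vertices are exactly $N_G(w)$, every component meets $N_G(w)$, and their orders have different parity.

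This is where $4$-connectivity enters.
\begin{itemize}
\item $G-w$ is $3$-connected, so Theorem~\ref{Tutte_lemma} yields a chordless cycle $C$ through $u$ with $G-w-V(C)$ connected.
\item Lemma~\ref{sp_odd_sub_lemma}, applied in $G-w-V(C)$, takes care of the neighbors of $w$ off $C$.
\item The two complementary halves $P^1_S,P^2_S$ of $C$, for a suitable even set $S\subseteq V(C)$, provide the parity switch between $F_1$ and $F_2$.
\item $\delta(G)\ge 4$ supplies the extra edges from $C$ into $G-w-V(C)$ needed in the four-case analysis.
\end{itemize}
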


It is natural to consider the case of multigraphs in Theorem~\ref{thm-4-conn},
but we do not know whether it holds.

We point out that the $4$-connectedness assumption is necessary 
because there are infinitely many $3$-connected graphs $G$ with $\chi'_o(G) = 4$.
We prove this fact in Section \ref{best_sec} by employing the same idea as 
in the proof of Theorem~\ref{thm-4} in \cite{APS-2016}.
Note that all graphs we have constructed need vertices of degree $3$,
and hence we pose the following conjecture.

\begin{conjecture}
Let $G$ be a $3$-connected graph of odd order.
If the minimum degree of $G$ is at least $4$, then $\chi'_o(G) \leq 3$.
\label{conj-3-conn}
\end{conjecture}

Kano, Katona, and Varga~\cite{KKV-2018} gave
a criterion for graphs $G$ to satisfy $\chi'_o(G) \le 2$,
and a polynomial time algorithm for decomposing a multigraph into two odd subgraphs
or showing the non-existence of such a decomposition.
Thus, it is natural to study a similar criterion for $\chi'_o(G)\le 3$.
By Theorem~\ref{thm-4-conn},
all graphs $G$ with $\chi'_o(G)=4$ are not $4$-connected,
suggesting that one possible approach is to focus on cut-sets of order at most $3$.

We prove Theorem \ref{thm-4-conn} by considering some cases.
Since some of them hold even under weaker connectivity assumptions
and would be helpful to approach Conjecture \ref{conj-3-conn},
we state them below separately.

\begin{theorem}
\label{thm-2}
Let $G$ be a $3$-connected graph of odd order.
If $G$ has at least two vertices of even degree,
then $\chi'_o(G)\le 3$.
\end{theorem}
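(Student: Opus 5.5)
The plan is to imitate Pyber's argument behind Theorem~\ref{even}. We split $E(G)$ as $E(F)\cup E(R)$, where $F$ is an odd subgraph and $R$ is a forest. Color $F$ with color $3$ and color $R$ with colors $1,2$ by Pyber's theorem that every tree (hence every forest) is odd $2$-edge-colorable. Since $|V(G)|$ is odd, $F$ cannot be an odd factor, so we let $F$ be an odd factor of $G-u$, where $u$ is one of the even-degree vertices. Then all $\deg_G(u)$ edges at $u$ go into $R$. This is consistent, because $\deg_G(u)$ is even, so in the $2$-coloring of $R$ the vertex $u$ can receive two odd color classes.

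Concretely, I would build a spanning tree $T$ of $G$ that contains every edge incident with $u$. Since $G$ is $3$-connected, $G-u$ is connected, so such a $T$ exists. The forest $T-u$ then has exactly $\deg_G(u)$ components $T_1,\dots,T_d$, each containing exactly one neighbor of $u$. Let $C=E(G)\setminus E(T)$; all edges of $C$ lie in $G-u$. We seek $J\subseteq E(T-u)$ such that $F=C\cup J$ has odd degree at every vertex of $G-u$. This is a $T'$-join problem in the forest $T-u$ with prescribed parity set $T'=\{x: \deg_C(x)\ \text{even}\}$. It is solvable if and only if $|T'\cap V(T_i)|$ is even for every $i$. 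Then $R=G-E(F)=T-J$ is a forest, and the coloring above finishes the proof.

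The main obstacle is this parity condition on the components $T_i$. The global count is right, since $|V(G)|-1$ is even and degree sums are even, but individually each $T_i$ may be bad. I would repair it by modifying $T$, using $3$-connectivity and the second even-degree vertex $v$. Moving an edge $xy$ with $x\in T_i$ and $y\in T_j$ between $C$ and $T$ changes the $C$-parity of both endpoints. Likewise, reattaching a subtree from one $T_i$ to another changes which vertices lie in which component. Since $G-u$ is $2$-connected, there are many edges between distinct components, and bad components come in pairs. So I would pair them up and fix each pair by such an exchange along a path in $G-u$ between them.

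If a parity defect cannot be fixed within $G-u$, I would switch the roles and exclude $v$ instead of $u$. Alternatively, I would allow $u$ to have odd degree in $F$ and make $v$ the vertex of $F$-degree $0$, using $\deg_G(v)$ even in exactly the same way. The freedom to choose which of $u,v$ is excluded should provide the flexibility needed to make every component even. Checking that one of these choices always succeeds is where I expect most of the case work to be.
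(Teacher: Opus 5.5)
There is a genuine gap, and it is fatal to this route. The decomposition you are aiming for, an odd factor $F$ of $G-u$ with $G-E(F)$ a forest, does not exist in general, however $T$ is chosen and whichever even-degree vertex is excluded.

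Work out your own parity condition. Since $\sum_{x\in V(T_i)}\deg_T(x)=2|V(T_i)|-1$, you get $|T'\cap V(T_i)|\equiv 1+e_i \pmod 2$, where $e_i$ is the number of vertices of $T_i$ of even degree in $G$. So the $T'$-join exists if and only if every component $T_i$ of $T-u$ contains an odd number of even-degree vertices of $G$. (Directly: in $R=G-E(F)$ the odd-degree vertices other than $u$ are exactly the even-degree vertices of $G$. Each of the $\deg_G(u)$ branches at $u$ of the tree of $R$ through $u$ must contain an odd number of them.)

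This condition depends only on the vertex partition $\{V(T_i)\}$, so exchanging edges between $C$ and $T$ cannot help. No partition works unless $G$ has at least $\deg_G(u)\ge 4$ even-degree vertices besides $u$, and swapping the roles of $u$ and $v$ meets the same count.

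Concretely, $K_7$ minus two disjoint edges is $5$-connected of order $7$ with exactly three even-degree vertices, each of degree $6$. Yet it has no decomposition into an odd subgraph and a forest at all. The forest has at most $6$ of the $19$ edges, so the odd subgraph has at least $13$ edges. It misses an odd number of vertices, and missing three or more would leave room for at most $6$ edges. So it misses exactly one vertex $z$, and $z$ would then need $\deg_G(z)\ge 5$ even-degree vertices other than itself.

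The missing idea is to drop the requirement that the complement of $F$ be a forest. The paper's $F$ is in fact of your type: an odd factor of $G-w$ for an even-degree vertex $w$. But it is chosen relative to a chordless path $P$ joining $w$ to a second even-degree vertex $u$ with $G-V(P)$ connected (Theorem~\ref{Tutte}, with $P$ shortest so its inner vertices have odd degree).

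Lemma~\ref{all_edges} applied to $G'=(G-E(P))-w$ gives an odd factor $F$ containing all edges of $G'$ at $u$, with $G'-E(F)$ a forest. Then $G-E(P)-E(F)$ is a forest plus the single vertex $w$. Lemma~\ref{mfv} replaces Pyber's tree theorem, splitting it into $H_1,H_2$ that are odd except possibly at $w$. As $\deg_G(w)-1$ is odd, exactly one $H_i$ is even at $w$. The edges of $P$ are then distributed between $H_1$ and $H_2$ to fix $w$ and keep every inner vertex odd, ending with degree $1$ at $u$.

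The cycles through $w$ and along $P$ that this allows in $G-E(F)$ are exactly what your forest requirement forbids. This step is also where the second even-degree vertex and $3$-connectivity are really used.
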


\begin{theorem}
\label{thm-32}
Let $G$ be a connected graph of odd order.
Suppose that $G$ has exactly one vertex of even degree, say $w$.
If $w$ is adjacent to all the other vertices in $G$
and $G-w$ is connected,
then $\chi'_o(G)\le 3$,
unless $G$ is isomorphic to $W_4$.
\end{theorem}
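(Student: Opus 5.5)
The plan is to exploit the rigid parity structure that the hypotheses force, and reduce the statement to finding a suitable subgraph of the Eulerian graph $H=G-w$.

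\textbf{Parity setup.} Let $n=|V(G)|$, which is odd. Since $w$ is adjacent to every other vertex, $\deg_G(w)=n-1$ is even. Every $v\neq w$ has odd $\deg_G(v)$, so $\deg_H(v)=\deg_G(v)-1$ is even. Hence $H$ is a connected Eulerian graph of even order $n-1$. Consequently, for every $S\subseteq V(H)$ and every component $C$ of $H-S$, the number of edges between $S$ and $C$ is even. This is the basic counting fact I will use repeatedly.

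\textbf{The template colouring.} Fix a vertex $x\in V(H)$ and let $d=\deg_H(x)$; note $d$ is even and positive. The three colour classes are built as follows.
\begin{itemize}
\item Colour~$1$ is the edge $wx$ together with all $d$ edges of $H$ at $x$. Here $x$ has degree $d+1$, each neighbour of $x$ has degree $1$, and $w$ has degree $1$, so colour~$1$ is odd.
\item Colour~$3$ consists of the $n-2$ spokes $wv$ with $v\neq x$. This gives $w$ odd degree.
\item The remaining graph $R=H-x$ has to be split between colours~$2$ and~$3$.
\end{itemize}
For colour~$3$ to stay odd, each $v\neq x$ must receive an even number of $R$-edges of colour~$3$. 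In colour~$2$ every degree must be odd or $0$. In $R$, the vertices of $N_H(x)$ have odd degree and all other vertices have even degree. It follows that a vertex not in $N_H(x)$ must get colour-$2$ degree $0$. The task therefore reduces to finding an odd factor $J$ of $H[N_H(x)]$ (so colour~$2$ is $J$, and colour~$3$ additionally takes $R-E(J)$). By the fact used in Theorem~\ref{even}, $J$ exists as soon as every component of $H[N_H(x)]$ has even order.

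\textbf{Relaxing the template.} The first step is to generalise the template so that the condition on $x$ is weaker. Instead of putting all $H$-edges at $x$ into colour~$1$, I allow colour~$1$ to be $\{wx\}\cup K$, where $K\subseteq E(H)$ has even degree at $x$ and odd or zero degree elsewhere. Similarly, colour~$2$ may take a $T$-join in a subgraph of $H-x$ rather than an odd factor of $H[N_H(x)]$. One can also move an odd number of spokes from colour~$3$ into colour~$2$, with $T$ adjusted accordingly. In each variant the existence condition becomes: every component of some auxiliary graph contains an even number of vertices of a prescribed set $T$. The Eulerian counting fact translates this into a parity condition on component orders, as in the template case.

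\textbf{Case analysis and the main obstacle.} Next I will show that some choice of $x$ and of variant always satisfies the condition, except when $H\cong C_4$, that is, $G\cong W_4$. Note that a naive version, putting one whole spoke class into colour~$1$, is doomed: with $H-S$ of odd order some component is always odd. So genuine interaction between $x$'s neighbourhood and the rest of $H$ is needed.

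I expect this case analysis to be the main obstacle. First I would try $x$ chosen so that $H[N_H(x)]$ has no odd component; for instance, $x$ with $N_H(x)$ spanning a perfect matching, or with $H[N_H(x)]$ connected. If every vertex fails, each $x$ has an odd component in $H[N_H(x)]$, and I would use this strong local structure to apply a relaxed variant with colour~$2$ also using edges leaving $N_H(x)$. The cases $H$ a cycle and $n$ small will be checked by hand. For $H=C_4$, each neighbourhood is two non-adjacent vertices and every variant fails, which matches $\chi'_o(W_4)=4$. For $H=C_{2k}$ with $k\ge3$, I expect an explicit colouring to exist.
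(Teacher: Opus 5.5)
Your proposal stops where the proof has to happen: the claim that some choice of $x$ and of a variant always works unless $G\cong W_4$ is only announced. Worse, it cannot be carried out inside the framework you set up.

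\textbf{Why the framework is too narrow.} In every variant the colour class containing $wx$ is $\{wx\}\cup K$ with $K\subseteq E(H)$, so it contains exactly one spoke. Since $\deg_G(w)=n-1$ is even and $w$ must have odd or zero degree in each class, the other $n-2$ spokes must all lie in a single class. In particular, moving an odd number of spokes from colour~$3$ to colour~$2$ leaves $w$ with even positive degree in colour~$3$, unless all of them are moved, which is only a relabelling.

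\textbf{A counterexample.} Let $G$ be $K_{2,4}$, with parts $\{a_1,a_2\}$ and $\{b_1,\dots,b_4\}$, plus a vertex $w$ joined to all six vertices. It satisfies every hypothesis. Write $(k,j,\ell)$ for the degree of a vertex in $K$, in the spoke-free class $J$, and in the $H$-part $L$ of the class carrying the $n-2$ spokes. At $v\neq x$ you need $k,j$ both zero or both odd, and $\ell$ even. At $x$ you need $k$ even, and $j,\ell$ both zero or both odd. So every $b_i\neq x$ is $(0,0,2)$ or $(1,1,0)$.
\begin{itemize}
\item If $x=a_1$, the triple of $a_2$ is forced to be that of $a_1$ with $k$ and $j$ exchanged. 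But $a_1\in\{(4,0,0),(2,1,1),(0,1,3),(0,3,1)\}$ while $a_2\in\{(0,0,4),(1,1,2),(1,3,0),(3,1,0)\}$, so this is impossible.
\item If $x=b_1$ is $(2,0,0)$, let $p$ and $q$ count the $b_i$, $i\ge 2$, with $a_1b_i\in K$ and with $a_1b_i\in J$, respectively. The $k$-degree of $a_2$ is $1+q$, forcing $q$ even. The $j$-degree of $a_1$ is $q$, and since $a_1$ has positive $k$-degree this forces $q$ odd.
\item If $x=b_1$ is $(0,1,1)$, the $\ell$-degrees of $a_1$ and $a_2$ differ by one, so they cannot both be even.
\end{itemize}
Hence no odd $3$-edge-colouring of this $G$ has a class with a single spoke, and no choice of $x$ or of your relaxations can succeed.

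\textbf{The missing idea.} You need to split the spokes more evenly and let a second, non-adjacent vertex absorb the parity. The paper first notes that $H$ is Eulerian of even order, hence not complete, and that the cycle case is the wheel case. It then uses Lemma~\ref{2-connected_lem} to pick nonadjacent $x,y\in V(H)$ with $H-\{x,y\}$ connected, and builds three classes:
\begin{itemize}
\item an odd factor $F$ of $H-\{x,y\}$;
\item all edges at $x$, together with the spokes $wv$ for $v\notin N_G(x)\cup\{y\}$;
\item everything else.
\end{itemize}
The class containing $wx$ then has $n-2-\deg_H(x)$ spokes (three in the example above). Every $v\neq x,y,w$ has degree exactly $1$ in the second class. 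The vertex $y$, untouched by the first two classes, keeps its odd degree in the third.
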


Petru\v{s}evski \cite[Lemma 3.2]{Petrusevski-2018} proved 
that 
for a connected graph $G$ of odd order,
if there are two adjacent vertices $w$ and $u$ of even degree
such that both $G-w$ and $G -\{w,u\}$ are connected,
then $\chi'_o(G)\le 3$.
Theorem \ref{thm-2} extends this result assuming the $3$-connectedness.

Some other results on odd edge-coloring can be found 
in \cite{KKV-2018, LPS-2015, Petrusevski-2018, PS-2023}.
Petru\v{s}evski and \v{S}krekovski~\cite{PS-2021} conjectured that 
every connected multigraph $G$ with $\chi'_o(G)=4$ becomes odd $3$-edge-colorable by removing a particular edge.
This was recently proved by Lin, Petru\v{s}evski, and Yang~\cite{LY-2024}.
We show the following theorem,
which is an analog for the case of odd $2$-edge-colorable.

\begin{theorem}
Let $G$ be a connected Eulerian graph of  odd order.
Then there exists an edge $e$ such that $G-e$ is odd $2$-edge-colorable.
\label{Eulerian}
\end{theorem}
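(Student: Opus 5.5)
The plan is to reduce the statement to a parity/$T$-join argument. We may assume $n=|V(G)|\ge 3$, since otherwise $G$ has no edges. Fix an edge $e=uv$. In $G-e$ the vertices $u,v$ have odd degree and every other vertex has even degree. Suppose $E(G-e)=A\cup B$ with $A,B$ odd subgraphs. A vertex $x\ne u,v$ has even total degree, so $\deg_A(x)$ and $\deg_B(x)$ are either both odd or both zero. A vertex of odd total degree must have degree zero in one class. So I will look for a coloring with the following shape: $B$ contains every edge at $v$ other than $e$, $A$ contains every edge at $u$ other than $e$, and every other vertex is odd in both classes.

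Concretely, let $A$ be a subgraph of $G-v$ in which every vertex of $V(G)\setminus\{v\}$ has odd degree, and which contains all edges of $G-v$ incident with $u$. Put $B=E(G-e)\setminus A$. Then:
\begin{itemize}
\item $\deg_B(v)=\deg_G(v)-1$, which is odd;
\item $\deg_B(u)=0$;
\item for $x\ne u,v$, $\deg_B(x)=\deg_G(x)-\deg_A(x)$, which is odd.
\end{itemize}
Hence $A$ and $B$ are odd subgraphs and $G-e$ is odd $2$-edge-colorable.

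Removing the forced edges at $u$, the requirement on $A$ becomes a $T'$-join in $H=G-\{u,v\}$, where $T'=V(H)\setminus N_G(u)$. The degree of $u$ in $A$ is $\deg_G(u)-1$, which is odd, so $u$ imposes no further condition. Globally $|T'|=(n-2)-(\deg_G(u)-1)$ is even. A $T'$-join exists if and only if every component $C$ of $H$ satisfies $|C\cap T'|\equiv 0 \pmod 2$, that is, $|C|\equiv e_G(C,u)\pmod 2$ (the standard $T$-join criterion, the same fact behind Theorem~6.7 of \cite{AK-2011}). In particular, if $G-\{u,v\}$ is connected we are done immediately.

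Two facts help in general. First, since $G$ is Eulerian, summing degrees over $C$ gives $e_G(C,u)+e_G(C,v)\equiv 0\pmod 2$ for each component $C$ of $H$. So the parity condition for $C$ is symmetric in $u$ and $v$, and its failure means $|C|\not\equiv e_G(C,u)\equiv e_G(C,v)$. Second, the conditions summed over all components of $H$ hold automatically. So the issue is only the distribution of parities among components. I would choose $e$ to avoid bad components: take $u$ to be a non-cut vertex lying in an end-block of $G$ (possible since $G$ is $2$-edge-connected), and choose $v\in N_G(u)$ so that $\{u,v\}$ separates as little as possible. If some component $C$ of $G-\{u,v\}$ still violates the condition, replace $uv$ by an edge inside $C\cup\{u\}$ or $C\cup\{v\}$ and argue by a minimal counterexample or extremal choice (for instance, a pair $\{u,v\}$ maximizing the largest component of $G-\{u,v\}$). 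The aim is to show that a violating component yields a better choice of edge.

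A fallback is more flexibility in the construction. Instead of forcing all edges at $u$ into $A$, one may let $u$ and $v$ each be saturated in different classes on different sides of the separation. Alternatively, use induction on $|E(G)|$: remove an edge-disjoint cycle through a separation and recombine colorings, since adding an even cycle toggles parities only along it.

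The main obstacle I expect is this component-parity step. Showing that an edge $uv$ exists for which every component $C$ of $G-\{u,v\}$ satisfies $|C|\equiv e_G(C,u)\pmod 2$ is where the Eulerian hypothesis and the odd order must really be used. The rest is routine bookkeeping of degrees.
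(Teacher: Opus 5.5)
\paragraph{Gap: the existence of a suitable edge is never proved.} Your degree bookkeeping is correct, and it is the paper's construction. Put your $v$ in the role of the paper's $w$. Then your $A$ is the odd factor $F$ of $G-v$ that contains every edge at $u$ (Lemma~\ref{all_edges}), and $B=G-E(F)-uv$. The gap is the step you flag yourself: you never show that an edge $uv$ with the required property exists. The end-block choice, ``separating as little as possible'', replacing $uv$ by an edge inside $C\cup\{u\}$, and the minimal-counterexample idea state no claim and prove none. In particular, nothing shows that a violating component yields a better edge, and the fallback ideas are not developed. So the argument is incomplete at its only nontrivial step.

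\paragraph{The difficulty is mislocated, and the gap is easy to close.} No component-parity analysis is needed. By your own reduction it suffices that $G-\{u,v\}$ is connected, since a $T'$-join then exists. That is a pure connectivity statement, using only that $G$ is connected and bridgeless. The Eulerian and odd-order hypotheses are already used up in your degree count. The paper closes the gap with Lemma~\ref{connected}: there are adjacent $w,u$ with $G-w$ and $G-\{w,u\}$ connected.

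You can also prove what you need directly.
\begin{enumerate}
\item Since $G$ has no bridge, an end-block $B$ of $G$ (or $G$ itself) is $2$-connected with $|V(B)|\ge 3$. Let $c$ be its cut vertex, if any, and pick $x\in V(B)-\{c\}$.
\item If $B-x$ is $2$-connected or is $K_2$, any neighbor $y\neq c$ of $x$ works. Such a $y$ exists because $\deg_B(x)\ge 2$.
\item Otherwise $B-x$ has two end-blocks with disjoint interiors. By the $2$-connectivity of $B$, each interior contains a neighbor of $x$. Hence some neighbor $y\neq c$ of $x$ is a non-cut vertex of $B-x$.
\end{enumerate}
Then $B-\{x,y\}$ is connected and contains $c$, so $G-\{x,y\}$ is connected. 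Taking $\{u,v\}=\{x,y\}$, your construction finishes the proof.
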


This article is organized as follows. 
In the next section,
we construct infinitely many $3$-connected graphs $G$ with $\chi'_o(G) = 4$,
which shows the best possibility of the assumption of Theorem \ref{thm-4-conn}.
We give some lemmas in Section \ref{lemmas_sec},
and then give proofs to our main theorems in Section \ref{proofs_sec}.
In the last section,
we give some observation toward Conjecture \ref{conj-3-conn}.

\section{$3$-connected graphs that are not odd $3$-edge-colorable}
\label{best_sec}

In this section,
we show that there are infinitely many $3$-connected graphs $G$ with $\chi'_o(G) = 4$.
In fact, 
inspired by Theorem \ref{thm-4} due to \cite{APS-2016},
we prove the following theorem.

\begin{theorem}
\label{hanrei}
Suppose that a connected graph $G$ has a vertex $w$ satisfying  the following three conditions:
\begin{itemize}
\item
$G-w$ is a bipartite graph.
\item
$w$ has degree $4$ 
and all the other vertices have degree $3$.
\item
$w$ is connected to each partite set of $G-w$ by exactly two edges.
\end{itemize}
Then $\chi'_o(G) =4$.
\end{theorem}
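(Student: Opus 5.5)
The equality splits into $\chi'_o(G)\le 4$ and $\chi'_o(G)\ge 4$. The upper bound comes from Theorem~\ref{thm-P}. Let $A$ and $B$ be the partite sets of $G-w$. Counting the edges of $G-w$ from each side gives $3|A|-2=3|B|-2$, so $|A|=|B|$. Since $w$ sends two edges into each side, $A$ and $B$ are non-empty. As $G-w$ is simple and bipartite and each vertex of $A$ has at least one neighbour in $B$ of degree $3$, we get $|A|=|B|\ge 2$. Hence $|V(G)|\ge 5$, and Theorem~\ref{thm-P} applies. So the work is to show that $G$ has no odd $3$-edge-coloring.

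Suppose $c$ is an odd $3$-edge-coloring with color classes $H_1,H_2,H_3$, and look at the local structure.

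\emph{Vertices of degree $3$.} Each vertex $v\neq w$ has degree $3$, which can only be written as a sum of positive odd parts as $3$ or $1+1+1$. So $v$ is either \emph{monochromatic}, with all three edges of one color, or \emph{rainbow}, with one edge of each color.

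\emph{The vertex $w$.} Since $\deg(w)=4$ and at most three colors are available, the only decomposition into odd parts is $3+1$. After renaming, $w$ has three edges of color $1$, one edge of color $2$, and none of color $3$.

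For $X\in\{A,B\}$, let $m_i^X$ be the number of monochromatic vertices of color $i$ in $X$, and let $r^X$ be the number of rainbow vertices in $X$. Let $e_i$ be the number of color-$i$ edges between $A$ and $B$, and let $e_i(w,X)$ be the number of color-$i$ edges from $w$ to $X$. Summing degrees in $H_i$ over each side gives
\[
3m_i^A+r^A=e_i+e_i(w,A),\qquad 3m_i^B+r^B=e_i+e_i(w,B).
\]
Subtracting and reducing modulo $3$ yields, for every $i\in\{1,2,3\}$,
\[
r^A-r^B\equiv e_i(w,A)-e_i(w,B)\pmod 3 .
\]
The left-hand side does not depend on $i$. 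This invariance modulo $3$ is the key step; parity counting alone turns out to be consistent and gives no contradiction.

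Now compare two colors.
\begin{itemize}
\item For $i=3$, color $3$ does not appear at $w$, so the right-hand side is $0$.
\item For $i=2$, the single color-$2$ edge at $w$ goes to exactly one of $A$ or $B$, so the right-hand side is $\pm1$.
\end{itemize}
These give $0\equiv\pm1\pmod 3$, a contradiction. (Color $1$ is then automatically consistent.)

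Hence no odd $3$-edge-coloring exists, and together with the upper bound $\chi'_o(G)=4$.

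The main point to get right is the choice of invariant: the rainbow-versus-monochromatic dichotomy turns degree sums into the quantity $3m+r$, and working modulo $3$ rather than modulo $2$ is what removes the dependence on $i$. One should also check that the assumption that $w$ sends exactly two edges to each side is what makes the color-$3$ difference vanish. Connectivity is needed only for applying Theorem~\ref{thm-P}.
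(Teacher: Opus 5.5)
Your proof is correct and is essentially the paper's argument: you split the degree-$3$ vertices into monochromatic and rainbow ones, double count the color-$i$ edges between the two partite sets, and get a contradiction modulo $3$ by comparing two colors. The only difference is that you compare the color used once at $w$ with the color absent at $w$, whereas the paper compares it with the color used three times, which needs the $2$--$2$ split of the edges at $w$. So, contrary to your closing remark, your lower-bound argument does not use that split; it enters only in your worthwhile check that $|V(G)|\ge 5$, so that Theorem~\ref{thm-P} applies.
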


Note that the wheel $W_4$ satisfies the conditions in Theorem \ref{hanrei},
where $w$ corresponds to the center of $W_4$.
Thus, Theorem \ref{hanrei} shows which property of $W_4$ causes it to be non-odd 3-edge-colorable.

Notice also that there exist infinitely many $3$-connected graphs $G$ satisfying the conditions in Theorem \ref{hanrei}.
For example, from a $3$-connected cubic bipartite graph,
subdivide two appropriate edges and identify the obtained vertices.

\medskip \noindent
{\em Proof of Theorem \ref{hanrei}.}
 It suffices to show that $\chi'_o(G) \ge 4$ by Theorem~\ref{thm-P}.
Let $G' = G - w$, which is a bipartit graph,
and let $X, Y$ be its partite sets.
Suppose that there exists an odd $3$-edge-coloring of $G$ with the color set $\{1, 2, 3\}$.
Without loss of generality, we may assume that
one edge between $w$ and a vertex in $X$ are colored with $1$,
and the remaining three edges incident to $w$ are colored with $2$.

For $i \in \{1,2,3\}$,
let $X_i$ be the set of vertices $x$ in $X$
such that the edges incident to $x$ is colored entirely with $i$,
and let $X_0 = X - (X_1 \cup X_2 \cup X_3)$.
Note that each vertex in $X_0$ is incident to exactly one edge of each of the colors $1$, $2$ and $3$.
Analogously, we employ notation $Y_1$, $Y_2$, $Y_3$ and $Y_0$
for the respective subsets of $Y$.

By double counting the edges of color $1$ between $X$ and $Y$,
and those of color $2$,
we derive the following two equalities:
\begin{eqnarray*}
3|X_1| + |X_0| -1 &=& 3|Y_1| + |Y_0|, \\
3|X_2| + |X_0| -1 &=& 3|Y_2| + |Y_0| -2.
\end{eqnarray*}
By subtracting these two equalities,
we obtain $3(|X_1| - |X_2|) = 3(|Y_1| - |Y_2|) + 2$,
which is a contradiction.
\qed

\section{Lemmas} 
\label{lemmas_sec}

For two subsets $X$ and $Y$ of a set, we denote their symmetric difference by $X \triangle Y$.
If $X\subseteq Y$, then we often write $Y-X$ for $Y\setminus X$. 
Let $G$ be a graph and  $v$ be a vertex of $G$. 
Then the minimum degree of $G$ is
denoted by $\delta(G)$, and
the set of edges of $G$ incident with $v$ is denoted by $E_G(v)$.
We say that a graph $G$ is decomposed into subgraphs $H_1, H_2, \ldots, H_m$
if the edge set of $G$ satisfies $E(G)=E(H_1) \cup E(H_2) \cup \cdots \cup E(H_m)$,
and the sets $E(H_i)$ are pairwise disjoint.

For a cycle or a path $Q$ in a graph $G$,
an edge of $E(G)-E(Q)$ joining two vertices of $Q$ is called a \emph{chord} of $Q$.
If $G$ has no chord of $Q$, then $Q$ is called a {\em chordless cycle} or a {\em chordless path}.

\subsection{Lemmas for an odd subgraph}

We introduce three lemmas,
which have been shown in some literatures
and used widely to show some results on odd edge-coloring.

The first lemma can be easily shown by taking a minimal $T$-join,
see \cite{BM-2008} for $T$-join.
Thus, we omit its proof.

\begin{lemma}
\label{sp_odd_sub_lemma}
Let $G$ be a connected graph and let $S \subseteq V(G)$ with $|S| \equiv 0 \pmod{2}$.  
Then $G$ has a spanning subgraph $F$ such that 
every non-trivial component of $F$ contains a vertex of $S$
and for each vertex $v$ in $G$,
$\deg_F(v)$ is odd if $v \in S$, and otherwise $\deg_F(v) \in \{0, 2, 4,  \ldots\}$.
\end{lemma}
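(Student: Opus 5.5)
We take a minimal $T$-join with $T=S$. Since $G$ is connected and $|S|$ is even, pair up the vertices of $S$ arbitrarily as $\{s_1,t_1\},\ldots,\{s_k,t_k\}$ and choose an $s_i$--$t_i$ path $P_i$ in $G$ for each $i$. Let $J$ be the symmetric difference $E(P_1)\triangle E(P_2)\triangle\cdots\triangle E(P_k)$. For every vertex $v$, the degree of $v$ in the edge set $J$ has the same parity as the number of paths having $v$ as an endpoint, since each path contributes an even degree at its internal vertices. Hence $\deg_J(v)$ is odd exactly when $v\in S$. So the family of edge sets $J\subseteq E(G)$ whose odd-degree vertices are precisely $S$ is nonempty.

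Among all such edge sets, choose $J$ with $|J|$ minimum, and let $F$ be the spanning subgraph with edge set $J$. By construction $\deg_F(v)$ is odd for $v\in S$ and even (possibly $0$) for $v\notin S$, which is the required degree condition.

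It remains to show that every non-trivial component of $F$ meets $S$; this is the only step that uses minimality. Suppose some non-trivial component $C$ of $F$ contains no vertex of $S$. Then every vertex of $C$ has even degree in $F$, so $E(C)$ is a nonempty edge set in which all degrees are even. Put $J'=J-E(C)$. Deleting $E(C)$ changes degrees only at vertices of $C$, and there it subtracts the full (even) degree. So the parity of every vertex's degree is unchanged, and the odd-degree vertices of $J'$ are still exactly $S$. But $|J'|<|J|$, contradicting the choice of $J$. Therefore every non-trivial component of $F$ contains a vertex of $S$, and $F$ is the desired subgraph.

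The main point to verify carefully is this parity bookkeeping in the last step: removing an entire component with all degrees even preserves the odd-degree set. The existence step is routine once $G$ is connected. Minimality even gives more than needed, namely that $F$ is a forest, but the lemma requires only the component condition.
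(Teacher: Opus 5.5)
Your proof is correct and is the argument the paper intends. The paper omits the proof, saying only that the lemma follows by taking a minimal $T$-join with $T=S$; your existence step via symmetric differences of paths, followed by deleting an $S$-free non-trivial component to contradict minimality, supplies exactly those details.
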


The following lemma is stated in \cite{Petrusevski-2018}, 
and its proof is based on the fact that every tree is odd $2$-edge-colorable.
The argument constructs a new forest by splitting the vertex $w$ of a graph $G$ into $\deg_G(w)$ new end-vertices,  and then joining each vertex of $N_G(w)$ to one of these new end-vertices.

\begin{lemma}[Proposition 2.3 in \cite{Petrusevski-2018}]
\label{mfv}
Let $G$ be a connected graph that has a vertex $w$
such that $G - w$ is a forest.
Then there exists a decomposition of $G$ into two subgraphs $H_1, H_2$
such that $\deg_{H_i} (x)$ is odd
for every $x \in V(H_i) - \{w\}$ and each $i \in \{1,2\}$.
\end{lemma}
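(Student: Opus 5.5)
We prove Lemma~\ref{mfv} by reducing to Pyber's theorem that every tree is odd $2$-edge-colorable, applied to each component of an auxiliary forest. Let $F = G - w$, a forest, and write $d = \deg_G(w)$ and $N_G(w) = \{u_1, \ldots, u_d\}$; since $G$ is simple these neighbours are distinct. Build $F'$ from $F$ by adding $d$ new vertices $w_1, \ldots, w_d$ and the edges $w_i u_i$ for $1 \le i \le d$. Thus $w$ is split into $d$ end-vertices, one attached to each neighbour of $w$. The new edges are in bijection with $E_G(w)$, so $E(F')$ is in natural bijection with $E(G)$.

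First check that $F'$ is a forest. Each new vertex $w_i$ is a leaf, and attaching pendant vertices to a forest creates no cycle. Every component of $F'$ has at least one edge: a component of $F$ meets $N_G(w)$ because $G$ is connected, and every component of $F$ with at least one edge is already non-trivial. If $G$ is a single vertex there is nothing to prove. So every component of $F'$ is a non-trivial tree.

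Next apply Pyber's theorem, that every tree is odd $2$-edge-colorable (quoted in the introduction), to each component of $F'$. Taking the union gives a $2$-edge-coloring of $F'$ in which every vertex has odd degree in each color class it meets. The only case needing care is a single-edge tree, which is trivially colored with one color. Transfer this coloring to $G$ through the edge bijection, and let $H_1, H_2$ be the two color classes.

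Finally verify degrees. For $x \in V(G) - \{w\}$, the edges of $G$ at $x$ correspond exactly to the edges of $F'$ at $x$. This holds because the edge $wu_i$ corresponds to $w_iu_i$, which is incident to $u_i$. Hence $\deg_{H_i}(x)$ equals the degree of $x$ in color class $i$ of $F'$, which is odd whenever $x \in V(H_i)$. At $w$ no condition is required, and the parities of the contributions from the $w_j$ may combine arbitrarily; this is exactly why $w$ is excluded in the statement. The main step is simply the correctness of the splitting construction, in particular that the degree of each $u_i$ is preserved. Once the correspondence is set up, the rest is routine.
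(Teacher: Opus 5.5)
Your proposal is correct and follows the same route the paper indicates: split $w$ into $\deg_G(w)$ pendant end-vertices, one attached to each neighbour, odd $2$-edge-color each resulting tree by Pyber's theorem, and pull the coloring back to $G$, using that the degree of every $x \neq w$ is unchanged. Your check that every component of the new forest is non-trivial is not actually needed, since an edgeless component imposes no condition, so the argument never uses the connectivity of $G$.
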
 

The following lemma also can be shown by using the above consturuction 
for Lemma~\ref{sp_odd_sub_lemma}.

\begin{lemma}[Proposition 2.6(i) in \cite{Petrusevski-2018}]
\label{all_edges}
Let $G$ be a connected graph of even order,
and let $w$ be a vertex of odd degree.
If $G - w$ is connected,
then $G$ has an odd factor $F$ such that 
$G - E(F)$ is a forest and $F$ contains all edges in $E_G(w)$.
\end{lemma}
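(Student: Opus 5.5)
The plan is to build $F$ as ``everything except part of a spanning tree of $G-w$''. Then $G-E(F)$ lies inside that tree and is automatically a forest. The parities are then corrected using the tree alone, via the standard fact that a tree has a (unique) $S$-join for every vertex set $S$ of even size.

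Since $G-w$ is connected, fix a spanning tree $T$ of $G-w$ and put $C=E(G-w)-E(T)$. Consider the edge set $D=C\cup E_G(w)$. In $D$ the vertex $w$ has degree $\deg_G(w)$, which is odd. Let $S$ be the set of vertices $v\in V(G)-\{w\}$ with $\deg_D(v)$ even.

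First I check that $|S|$ is even. We have
\[
\sum_{v\neq w}\deg_D(v)=2|C|+\deg_G(w)\equiv 1 \pmod 2,
\]
so the number of vertices of $V(G)-\{w\}$ with odd $D$-degree is odd. Since $|V(G)-\{w\}|=|V(G)|-1$ is odd, the number of vertices with even $D$-degree, namely $|S|$, is even.

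Next, take the $S$-join $J\subseteq E(T)$: a subgraph of the tree $T$ in which exactly the vertices of $S$ have odd degree. It exists because $|S|$ is even; for each edge $e$ of $T$, put $e$ in $J$ if and only if one component of $T-e$ contains an odd number of vertices of $S$. This is also the case $G=T$ of Lemma~\ref{sp_odd_sub_lemma}.

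Now set $F=D\cup J$. Since $J\subseteq E(T)$ is disjoint from $D$, adding $J$ flips the parity exactly at the vertices of $S$, and $J$ contains no edge at $w$.
\begin{itemize}
\item Every vertex $v\neq w$ has odd degree in $F$: if $v\in S$ its even $D$-degree becomes odd, and if $v\notin S$ its $D$-degree was already odd and is unchanged.
\item The vertex $w$ has odd degree $\deg_G(w)$.
\item $F$ is spanning, so $F$ is an odd factor, and $E_G(w)\subseteq E(F)$ by construction.
\item $E(G)-E(F)=E(T)-J$, which is a subgraph of the tree $T$ and hence a forest.
\end{itemize}

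The only point needing care is the parity count showing $|S|$ is even. This is exactly where the hypotheses that $|V(G)|$ is even and $\deg_G(w)$ is odd are used. The connectedness of $G-w$ is used only to obtain the spanning tree $T$. I expect no real obstacle beyond this count.
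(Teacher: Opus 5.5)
Your proof is correct: the parity count showing $|S|$ is even, the tree $S$-join $J$, and the identity $E(G)-E(F)=E(T)-J$ all check out. The paper omits the proof, citing Petru\v{s}evski and pointing to the $T$-join construction; your argument (keep all non-tree edges and all edges at $w$, then fix parities inside a spanning tree of $G-w$) is essentially that construction, equivalent to splitting $w$ into pendant vertices and correcting parities within a spanning tree.
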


\subsection{Lemmas for subgraphs whose removal results in a connected graph}

In our argument, it is important to find suitable subgraphs
whose removal results still in a connected graph.
To do that, we first introduce the two theorems which were shown by Tutte. 

\begin{theorem}[Tutte \cite{Tutte-1961}]
\label{Tutte_lemma}
For every $3$-connected graph $G$ with an edge $e$ and a vertex $v$ that is not incident to $e$,
there exists a chordless cycle $C$ in $G - v$ such that $e \in E(C)$ and $G - V (C)$ is connected. 
\end{theorem}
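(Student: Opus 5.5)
The plan follows the classical non-separating induced cycle argument, with the extra requirement that $C$ avoid $v$ and pass through $e$. Write $e = ab$ and set $H = G - v$. Since $G$ is $3$-connected, $H$ is $2$-connected, so $e$ lies on some cycle of $H$. Choose a cycle $C$ in $H$ through $e$ such that $G - V(C)$ has a component $K$ containing $v$, and among all such cycles choose $C$ so that $|V(K)|$ is maximum. The claim is that $C$ is chordless and that $G - V(C) = K$.

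\emph{Chordless.} Suppose $C$ has a chord $f = xy$. The chord splits $C$ into two paths, and together with $f$ they form two shorter cycles $C_1$ and $C_2$. Exactly one of them contains $e$, unless $e = f$, which cannot happen since $e \in E(C)$; call it $C_1$. Then $V(C_1) \subsetneq V(C)$, so $G - V(C_1)$ contains $K$ together with the interior vertices of the other arc. That arc is nonempty because $f$ is a chord, not an edge of $C$. Since $G$ is $3$-connected, $\{x,y\}$ does not separate these interior vertices from $K$. Hence some vertex of $K$ or of the arc interior connects them, and the component of $v$ in $G - V(C_1)$ strictly contains $K$. This contradicts the maximality of $|V(K)|$.

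\emph{Connected complement.} Suppose $G - V(C)$ has another component $L \neq K$. Let $S = N(L) \subseteq V(C)$; by $3$-connectivity $|S| \ge 3$. The attachment points of $L$ on $C$ cut $C$ into segments. The aim is to reroute $C$ through $L$: pick two attachments $s, t \in S$ such that the $s$--$t$ arc $P$ of $C$ avoids $e$ and its interior misses some attachment vertex of $K$, or more carefully misses at least one vertex of $S$. Replace $P$ by an $s$--$t$ path through $L$. The new cycle $C'$ still contains $e$ and avoids $v$, since $v \in K$. Moreover $K$ together with the interior of $P$ lies in $G - V(C')$, and the interior of $P$ attaches to $K$ or is joined to it through the remainder.

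This rerouting is the main obstacle. It requires a careful choice of $s,t$ so that the component of $v$ actually grows. Following Tutte's bridge argument, I would take a \emph{$K$-maximal} choice: among the segments between consecutive vertices of $S$, at least one avoids $e$, because $|S| \ge 3$ gives at least three segments and $e$ lies in at most one of them. Using $3$-connectivity, which says $K$ is not separated by two vertices of $C$, one shows that some such segment has an interior vertex adjacent to $K$, or can be chosen so that enlarging $K$ is possible. If every $e$-free segment had interior disjoint from $N(K)$, then $N(K)$ would lie within the closure of the single segment containing $e$. Comparing its endpoints with the attachments of $L$ should then give a $2$-cut, contradicting $3$-connectivity.

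After rerouting, the component of $v$ strictly grows, again contradicting maximality. Hence $G - V(C) = K$ is connected, and $C$ is the required chordless cycle.
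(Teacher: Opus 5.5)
The paper gives no proof of this statement (it is quoted from Tutte), so I assess your argument on its own. Your extremal choice is the standard one: a cycle $C$ through $e$ in $G-v$ that maximizes the component $K$ of $G-V(C)$ containing $v$. However, neither of the two steps built on it goes through as written.

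\emph{The chord step.} Note that $G-V(C_1)=G-V(P_1)$ and that distinct components of $G-V(C)$ are non-adjacent. Hence the component of $v$ in $G-V(C_1)$ is larger than $K$ exactly when $K$ has a neighbour in the interior of $P_2$. Three-connectivity only gives a path from the interior of $P_2$ to $K$ in $G-\{x,y\}$. That path may pass through interior vertices of $P_1$, reached via another chord or another component of $G-V(C)$ attached to both open arcs, and those vertices are deleted in $G-V(C_1)$. So if $N(K)\subseteq V(P_1)$ you get nothing, and you cannot switch to $C_2$ because it misses $e$.

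\emph{The complement step.} You stop at ``should then give a $2$-cut'', and for a single component $L$ this is false. Even if $N(K)$ lies in the closed $e$-segment of $L$ with ends $s_1,s_k$, the pair $\{s_1,s_k\}$ need not be a cut, because another bridge of $C$ may have attachments both in that open segment and in the open complementary arc. There is a second slip: attachments of $L$ other than $s_1,s_k$ may belong to $N(K)$, so $N(K)$ need not lie in the closed $e$-segment at all. That case is handled by rerouting over the whole $e$-free $s_1$--$s_k$ arc, not over a single segment.

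\emph{The missing idea.} Treat all bridges of $C$ other than $K$, chords included, at once, and measure them against the segments into which $N(K)$ cuts $C$ (note $|N(K)|\ge 3$).
\begin{enumerate}
\item Rerouting shows that no such bridge $B$ has attachments $s,t$ for which the $s$--$t$ arc of $C$ avoiding $e$ has an interior vertex in $N(K)$.
\item Hence every such $B$ has all its attachments in the closure of one $K$-segment. If this is the segment $p\cdots a\,b\cdots q$ containing $e=ab$, then they all lie on $p\cdots a$ or all on $b\cdots q$.
\item Let $R$ be the corresponding subpath for some such $B$: a $K$-segment not containing $e$, or $p\cdots a$, or $b\cdots q$. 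Every bridge attached at an interior vertex of $R$ is then confined to $V(R)$, so the two ends of $R$ separate the interior of $R$ from $K$.
\item This interior is nonempty, because $G$ is simple and a bridge that is not a chord has at least three attachments.
\end{enumerate}
The resulting $2$-cut contradicts $3$-connectivity, so $K$ is the only bridge. This gives both chordlessness of $C$ and connectivity of $G-V(C)$ in one stroke.
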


\begin{theorem}[Tutte \cite{Tutte},  \cite{CFS, KO-2011}]
Let $G$ be a $3$-connected graph,
and let $w,u$ be two nonadjacent vertices.
Then there exists a chordless path $P$ connecting $w$ and $u$ 
such that $G - V(P)$ is connected.
\label{Tutte}
\end{theorem}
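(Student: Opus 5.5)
The plan is to deduce this from Theorem~\ref{Tutte_lemma} by adding the missing edge $wu$, applying the cycle version, and then deleting that edge again.

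Let $G^+ = G + wu$ and $e = wu$. Since $w$ and $u$ are nonadjacent in $G$, $G^+$ is still a simple graph. Adding an edge cannot decrease connectivity, so $G^+$ is $3$-connected. A $3$-connected graph has at least four vertices, so we can choose a vertex $v \notin \{w,u\}$; this $v$ is not incident to $e$. Theorem~\ref{Tutte_lemma} applied to $G^+$, $e$ and $v$ gives a chordless cycle $C$ in $G^+ - v$ with $e \in E(C)$ such that $G^+ - V(C)$ is connected.

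Set $P = C - e$. Since $e \in E(C)$ joins $w$ and $u$, $P$ is a path in $G$ with ends $w$ and $u$, and $V(P) = V(C)$.

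\emph{$P$ is chordless in $G$.} Suppose $f \in E(G) - E(P)$ joins two vertices of $P$. Then $f \neq e$, because $e \notin E(G)$. Hence $f \in E(G^+) - E(C)$ joins two vertices of $C$, so $f$ is a chord of $C$, contradicting the choice of $C$.

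\emph{$G - V(P)$ is connected.} Both ends of $e$ lie in $V(C) = V(P)$. Therefore $G - V(P)$ and $G^+ - V(C)$ have the same vertex set and the same edge set, so they are the same graph, which is connected.

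There is no real obstacle here. The only points that need checking are that $G^+$ remains simple and $3$-connected, which uses the nonadjacency of $w$ and $u$, and that a valid vertex $v$ exists. All the substance is in Tutte's cycle theorem (Theorem~\ref{Tutte_lemma}), which we take as given.
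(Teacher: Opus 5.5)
Your proof is correct, and it takes a different route from the paper, which gives no proof of Theorem~\ref{Tutte} at all. The paper cites Tutte's original path theorem, which lacks the chordless condition, and defers to later proofs in which chordlessness is, as the authors put it, not explicitly stated but essentially established. You instead obtain the full statement as a short corollary of Theorem~\ref{Tutte_lemma}, which the paper already takes as given.

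The key points are all in place:
\begin{itemize}
\item Nonadjacency of $w$ and $u$ makes $G+wu$ simple and makes $e=wu$ a non-edge of $G$. Hence any chord of $C-e$ in $G$ would be a chord of $C$ in $G+wu$.
\item Both ends of $e$ lie on $C$, so $G-V(C-e)$ is exactly $(G+wu)-V(C)$, which is connected.
\end{itemize}

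Your route buys economy and explicitness. The paper's reliance on outside results for this statement collapses to the single cycle theorem it needs anyway, and the chordless condition is proved rather than read off from arguments that do not state it. The citation route treats the path statement as a separate input, which is harmless but forces the caveat the authors add; your reduction makes that caveat unnecessary.
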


Note that Tutte \cite{Tutte} originally proved the weaker version of Theorem \ref{Tutte}
without the condition ``chordless''.
A complete proof of the full statement can be found in several papers,
for example, see \cite{CFS, KO-2011},
where the condition ``chordless'' is not explicitly stated,
but their proofs essentially establish Theorem~\ref{Tutte}.

The following is an analog for $2$-connected Eulerian graph,
which will be used for the proof of Theorem \ref{Eulerian}.
It is a special case of a known theorem,
such as \cite[Problem 6(b) in Section 6]{Lovasz-1993},
so we omit its proof.

\begin{lemma}\label{connected}
Let $G$ be a connected Eulerian graph.
Then there exist two adjacent vertices $w$ and $u$ in $G$ such that both $G - w$ and $G-\{w,u\}$ are connected.
\end{lemma}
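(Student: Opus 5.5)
Since $G$ is Eulerian and connected, every edge lies on a cycle, so $G$ has no bridge. (I assume $G$ has at least one edge; otherwise the statement is vacuous.) In particular, every block of $G$ is $2$-connected with at least three vertices. The plan is to choose a leaf block $B$ of $G$ and find the pair $w,u$ inside it. If $G$ is itself $2$-connected, set $B=G$ and let $c$ be an arbitrary vertex. Otherwise, let $c$ be the unique cut vertex of $G$ lying in $B$. Removing vertices of $B-\{c\}$ never separates $G-V(B)$ from $c$. Hence the lemma reduces to the following claim: \emph{in a $2$-connected graph $B$ with a distinguished vertex $c$, there are adjacent vertices $w,u\neq c$ such that $B-w$ and $B-\{w,u\}$ are connected.} Once the claim holds, $G-w$ and $G-\{w,u\}$ are connected, because the remainder of $B$ still contains $c$ and is connected.

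To prove the claim, pick any $w\in V(B)-\{c\}$. Then $B-w$ is connected by $2$-connectivity. Next consider the block structure of $B':=B-w$.

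\emph{Case 1: $B'$ is $2$-connected.} Since $\deg_B(w)\ge 2$, the vertex $w$ has a neighbor $u\ne c$, and $B'-u$ is connected.

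\emph{Case 2: $B'$ is a single edge.} Then $B$ is a triangle, and any edge $wu$ with $u\neq c$ works.

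\emph{Case 3: $B'$ has at least two leaf blocks $L_1,L_2$.} By $2$-connectivity of $B$, each leaf block $L_j$ contains a non-cut vertex of $B'$ that is adjacent to $w$; otherwise the cut vertex of $B'$ in $L_j$ would be a cut vertex of $B$. The vertex $c$ is an interior (non-cut) vertex of at most one leaf block, so choose the other leaf block $L$. Let $u$ be an interior vertex of $L$ adjacent to $w$; then $u\ne c$. Removing $u$ leaves $L-u$ connected, since $L$ is either $2$-connected or a $K_2$ in which $u$ is the end vertex, and $L-u$ still contains the cut vertex of $B'$ in $L$. Hence $B'-u=B-\{w,u\}$ is connected.

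The only delicate point is Case 3: $u$ must be an interior vertex of a leaf block, and that block must avoid $c$. These two requirements together guarantee that both $B-\{w,u\}$ and, after reattaching the rest of $G$ at $c$, the graph $G-\{w,u\}$ stay connected. The Eulerian hypothesis enters only through bridgelessness, which ensures that the leaf block $B$ is $2$-connected rather than a bridge.
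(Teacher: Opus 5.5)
Your proof is correct. The paper gives no argument for this lemma: it only notes that it is a special case of a known result and refers to Problem~6(b) of Section~6 in Lov\'asz's exercise book. Your block--cutvertex argument is a self-contained proof that the paper does not supply.

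It runs in three steps. Bridgelessness makes a leaf block $B$ $2$-connected. Gluing at its cut vertex $c$ reduces the problem to $B$. Then $2$-connectivity of $B$ forces $w$ to have a neighbor among the non-cut vertices of every leaf block of $B-w$, so one such neighbor avoids $c$. The case split on $B-w$ is exhaustive and each step checks out. The last step of Case~3 is even immediate, since $u$ is a non-cut vertex of the connected graph $B-w$.

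What your route buys is that it isolates the hypothesis actually used. You only need a $2$-connected leaf block, so the statement holds for every connected graph with minimum degree at least $2$. It also gives something stronger: in a $2$-connected graph, for any two distinct vertices $w$ and $c$, there is a neighbor $u\neq c$ of $w$ with $B-\{w,u\}$ connected, so $w$ itself can be prescribed. The citation is shorter but leaves the reader to locate and adapt the exercise.

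One small correction: the edgeless case is not vacuous. The one-vertex graph is connected and Eulerian but has no adjacent pair, so the lemma implicitly requires $G$ to have an edge. This is harmless for its use in the paper.
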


We also use the following lemma, which is a folklore.
For example, see \cite[Problem 6(c) in Section 6]{Lovasz-1993} for the $2$-connected case.
Notice that the solution of \cite[Problem 6(c) in Section 6]{Lovasz-1993} works 
even for a connected graph.

\begin{lemma}\label{2-connected_lem}
Let $G$ be a connected graph that is neither a cycle nor a complete graph.
Then there exist two nonadjacent vertices $x$ and $y$ in $G$
such that $G-\{x,y\}$ is connected.
\end{lemma}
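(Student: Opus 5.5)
The plan is to find the pair $\{x,y\}$ among the leaves of a spanning tree. If $T$ is a spanning tree of $G$ and $x,y$ are two leaves of $T$ with $|V(G)|\ge 3$, then $T-\{x,y\}$ is still a tree. Hence $G-\{x,y\}$ is connected. So it suffices to produce a spanning tree of $G$ having two leaves that are \emph{nonadjacent in $G$}. Small orders are dispatched directly. For $|V(G)|\le 3$, the only connected graph that is neither a cycle nor complete is the path $P_3$, and its two ends work. So assume $|V(G)|\ge 4$.

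Among all spanning trees of $G$, choose $T$ with the maximum number of leaves, and let $L$ be its leaf set. If $L$ is not a clique in $G$, we are done. So suppose $L$ is a clique. I would then run an exchange argument. Take a leaf $\ell\in L$ with neighbour $p$ in $T$, and any edge $\ell z\in E(G)-E(T)$. Replacing $\ell p$ by $\ell z$ gives a new spanning tree whose leaves are $(L-\{\ell\})$ together with possibly $p$, if $p$ drops to degree $1$. Applying the clique hypothesis to all such trees should force strong adjacencies. The aim is to show that every vertex that can become a leaf of some spanning tree is adjacent to every other such vertex. Then one shows that every vertex can be made a leaf, and concludes that $G$ is complete.

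The case $|L|=2$ needs separate treatment. Here $T$ is a Hamiltonian path $v_1v_2\cdots v_n$ with $v_1v_n\in E(G)$, so $G$ has a Hamiltonian cycle $C=v_0v_1\cdots v_{n-1}v_0$. Since $G$ is not a cycle, there is a chord $v_0v_k$ with $2\le k\le n-2$. The tree $(C-v_0v_1-v_0v_{n-1})+v_0v_k$ has the three leaves $v_0,v_1,v_{n-1}$, which contradicts the maximality of $|L|=2$. Hence $|L|\ge 3$ always holds.

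The main obstacle is the exchange step when $|L|\ge 3$ and $L$ is a clique. One must show that repeated leaf swaps eventually expose a nonadjacent pair unless $G=K_n$. I would first try the following concrete move. Take an internal vertex $u$ of $T$ not adjacent to some vertex $y$; if no such pair exists, $G$ is complete. Then try to re-root branches of $T$ using $L$-edges, which exist since $L$ is a clique, to make $u$ and $y$ simultaneously leaves. If that bookkeeping becomes unwieldy, the fallback is the classical route from Lov\'asz's Problem 6(c), which the paper notes works for connected graphs. If $G$ has a cut vertex, take $x,y$ to be non-cut vertices in two different end blocks, chosen nonadjacent; these exist unless the end blocks force $G$ to be small. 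If $G$ is $2$-connected, use a vertex $v$ of degree at least $3$ together with a nonneighbour, and analyse the blocks of $G-v$.
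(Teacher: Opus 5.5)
\textbf{There is a genuine gap: the decisive case of the spanning-tree approach is never argued, and the fallback is only a pointer.}

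Your reduction to spanning trees is correct, and so are the cases $|V(G)|\le 3$ and $|L|=2$. But the reduction is an equivalence. If $x,y$ are nonadjacent and $G-\{x,y\}$ is connected, each of $x,y$ has a neighbour in $G-\{x,y\}$, so they are leaves of a common spanning tree. Hence ``every spanning tree has a clique as leaf set'' just says the lemma fails for $G$. All the content therefore lies in the case $|L|\ge 3$ with $L$ a clique, and you never prove it.

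The exchange you propose for that case is also miscomputed. Replacing $\ell p$ by $\ell z$ leaves $\ell$ with degree $1$, so $\ell$ remains a leaf. What actually changes is that $z$ stops being a leaf (if $z\in L$) and $p$ becomes one (if $\deg_T(p)=2$).

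Maximality of $T$ alone cannot finish the job either. Take $K_4$ on $\{a,b,c,d\}$ and subdivide the edge $ab$ by a new vertex $s$. A spanning tree with $4$ leaves would be $K_{1,4}$, which is impossible since the maximum degree is $3$. The tree with edges $ac,ad,as,sb$ therefore has the maximum number of leaves, and its leaf set $\{b,c,d\}$ is a clique. Yet $G$ is neither complete nor a cycle. Here the corrected exchange $bs\to bc$ does give leaves $\{b,d,s\}$ with $ds\notin E(G)$, but you would need to prove that such moves always succeed. That proof is the missing argument.

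Your end-game ``every vertex can be made a leaf'' is also false whenever $G$ has a cut vertex, since a cut vertex is never a leaf of a spanning tree.

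Your fallback is the right route. It is the classical argument behind Lov\'asz's exercise, which the paper cites instead of proving the lemma. As written, though, it is only a pointer, and its literal reading fails. In $K_{2,3}$, a vertex of degree $3$ has only the other degree-$3$ vertex as a nonneighbour, and deleting both leaves three isolated vertices. To complete the fallback:
\begin{itemize}
\item \emph{$G$ has a cut vertex.} Take $x,y$ to be non-cut vertices in two distinct end blocks. They are automatically nonadjacent, and $G-\{x,y\}$ is connected. There are no ``small'' exceptions.
\item \emph{$G$ is $2$-connected.} Choose $v$ with $\deg_G(v)\ge 3$, which exists because $G$ is not a cycle. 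Then split into three cases.
\begin{itemize}
\item If $v$ is adjacent to every other vertex, any nonadjacent pair works, since $v$ survives and dominates what remains.
\item If $G-v$ is $2$-connected, take $x=v$ and $y$ any nonneighbour of $v$.
\item Otherwise, take $x,y$ to be neighbours of $v$ in two distinct end blocks of $G-v$, avoiding the cut vertices of those blocks. Such neighbours exist because $G$ is $2$-connected. Then $x,y$ are nonadjacent and $G-\{v,x,y\}$ is connected. Since $v$ keeps a third neighbour, $G-\{x,y\}$ is connected.
\end{itemize}
\end{itemize}
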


\section{Proofs of Theorems} 
\label{proofs_sec}

In this section, we prove our theorems.
For a technical reason,
we allow  that an odd subgraph may have some vertices of degree 0,
while all vertices in an odd factor must have odd degree.

\subsection{Proof of Theorem~\ref{Eulerian}}
{\em Proof.} 
By Lemma~\ref{connected},
there exist two adjacent vertices $w$ and $u$ of even degree such that 
both $G - w$ and $G-\{w,u\}$ are connected.
Let $G' = G - w$. Then $G'$ has even order, $u$ has odd degree in $G'$, and $G'-u$ is connected.
By Lemma~\ref{all_edges}, $G'$ has an odd factor $F$ such that $G'-E(F)$ is a forest and
 $F$ contains all the edges in $E_{G'}(u)$.
Since $F$ contains all the edges in $E_{G'}(u)$, $w$ is the unique neighbor of $u$ in $G-E(F)$.
Moreover, since $G$ is Eulerian, for every $x \in V(G)-\{w\}$, $\deg_{G-E(F)}(x)$ is odd and $\deg_{G-E(F)-wu}(w) = \deg_G(w) -1$ is also odd.
Then $G - wu$ is decomposed into two odd subgraphs $F$ and  $G-E(F)-wu$, and hence the theorem is proved with $e = wu$.
\qed

\subsection{Proof of Theorem~\ref{thm-2}}

{\em Proof.}
Let $G$ be a $3$-connected graph of odd order.
Suppose that $G$ has at least two vertices of even degree.

If $G$ has two adjacent vertices $w$ and $u$ of even degree, then let
$P=wu$, a path consisting of one edge $wu$.
If $G$ has no two adjacent vertices of even degree, then by Theorem \ref{Tutte},
there exists a chordless path $P$
such that $P$ connects two vertices of even degree and $G - V(P)$ is connected.
We take such a path $P$ so that it is as short as possible.
Let $w$ and $u$ be the end vertices of $P$. Then both $w$ and $u$ have even degree.

Suppose that $P$ contains an internal vertex $v$ of even degree in $G$.
Let $P'$ be the subpath of $P$ from $w$ to $v$.
Since $P$ is chordless and $G$ is $3$-connected,
all vertices in $V(P) - V(P')$ has a neighbor in $G - V(P)$,
and hence $G - V(P')$ is connected.
However, this contradicts the fact that $P$ is shortest.
Therefore,
all inner vertices in $P$ have odd degree in $G$.

Let $G' = \big( G - E(P) \big)- w $. 
Since $P$ is chordless and $G$ is $3$-connected,
all vertices in $V(P) - \{w,u\}$ have a neighbor in $G - V(P)$,
and hence both $G'$ and $G' -u$ are connected.
Then $G'$ has even order,
and $u$ has odd degree in $G'$.
By Lemma~\ref{all_edges}, $G'$ has an odd factor $F$
such that $G'-E(F)$ is a forest and $F$ contains all edges in $E_{G'}(u)$.

Let $G'' = G - E(P) - E(F)$. 
Note that $u$ is an isolated vertex of $G''$.
Since $G'' - w = G'-E(F)$ is a forest,
it follows from Lemma~\ref{mfv} that
there exists a decomposition of $G''$
into $H_1, H_2$ such that $\deg_{H_i} (x)$ is odd
for every $x \in V(H_i) - \{w\}$ and each $i \in \{1,2\}$.

Let $x_0, x_1, \dots , x_t$ be the vertices in $P$
along the order from $w$ to $u$,
where $x_0 = w$ and $x_t = u$.
We initially set $E_1 = E_2 = \emptyset$,
and then recursively add the edges in $P$ to either $E_1$ or $E_2$
so that $H_i + E_i$ is an odd subgraph for $i = 1,2$.

Since $\deg_{H_1}(w) +\deg_{H_2}(w) = \deg_{G}(w) - 1$ is odd,
we may assume that 
$\deg_{H_1}(w)$ is odd and $\deg_{H_2}(w)$ is even.
Then we add the edge $x_0x_1$ to $E_2$. Note that if $P=wu$, then the proof is complete.

Suppose that the edge $x_{s-1}x_{s}$, where $1 \leq s \leq t-1$,
has been considered and added to $E_{a}$, for some $a\in \{1,2\}$.
Then we decide for the edge $x_sx_{s+1}$ as follows:
\begin{description}
\item{Case 1.}  $\deg_{G''}(x_s) \geq 1$.~~
Since $\deg_{G''}(x_s) = \deg_G(x_s) -2 - \deg_{F}(x_s)
\equiv 1 -2 -1 \equiv 0 \pmod{2}$,
both of $\deg_{H_1}(x_s)$ and $\deg_{H_2}(x_s)$ are
positive and odd.
Then, we add the edge $x_sx_{s+1}$ to $E_{a}$.
\item{Case 2.}  $\deg_{G''}(x_s) =0$.~~
We add the edge $x_sx_{s+1}$ to $E_{b}$, where $\{a,b\}=\{1,2\}$.
\end{description}

We continue this procedure until we have considered all edges in $P$,
and let $H_i' = H_i + E_i$ for $i = 1,2$.
Note that $\deg_{H_1'}(w) = \deg_{H_1}(w)$ and 
$\deg_{H_2'}(w) = \deg_{H_2}(w) + 1$,
both of which are odd.
Let $1 \leq s \leq t-1$.
In Case 1, for $a,b$ with $\{a,b\}=\{1,2\}$, 
$\deg_{H_{a}'}(x_s) = \deg_{H_{a}}(x_s) +2$
and $\deg_{H_{b}'}(x_s) = \deg_{H_{b}}(x_s)$ are both odd.
In Case 2,
$\deg_{H_1'}(x_s) = \deg_{H_2'}(x_s) =1$.
In addition,
we see that $\deg_{H_{c}'}(u) = 1=\deg_{P}(u)$ for some $c\in \{1,2\}$,
and $\deg_{H_{d}'}(u) = 0$, where $\{c,d\}=\{1,2\}$.
Therefore,
both $H_1'$ and $H_2'$ are odd subgraphs.
Consequently, $G$ is decomposed into three odd subgraphs
$F, H_1', H_2'$.
\qed

\subsection{Proof of Theorem~\ref{thm-32}}

\noindent {\em Proof.} Let $G$ be a connected graph of odd order.
Suppose that $G$ has exactly one vertex of even degree, say $w$,
such that $w$ is adjacent to all other vertices in $G$ and $G - w$ is connected.
Let $G' = G - w$, which 
is an Eulerian graph of even order.
Then $G'$ is not a complete graph.
If $G'$ is a cycle, then $G$ is a wheel.
Since every wheel of odd order has an odd $3$-edge-coloring
except for $W_4$ (see Figure~\ref{fig-1}),
we may assume that $G'$ is not a cycle.

\begin{figure}[htbp] 
\begin{center}
\includegraphics[scale=0.6]{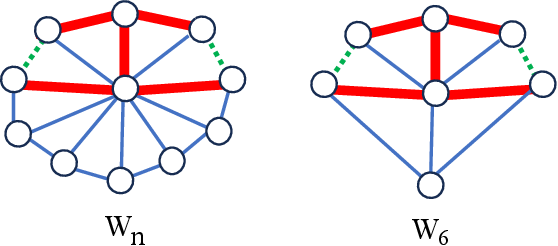}
\end{center}
\caption{An odd 3-edge-coloring of a wheel $W_n$ with even $n \ge 6$.}
\label{fig-1}  
\end{figure}

By Lemma \ref{2-connected_lem},
there exists two nonadjacent vertices $x$ and $y$ in $G'$ 
such that  $G'-\{x,y\}$ is connected.
Since $G'-\{x,y\}$ is connected and has even order,
it has an odd factor $F$.
Let $H_1$ be the subgraph of $G$ 
induced by the following edges;
all edges incident with $x$ and
all edges connecting $w$ and $V(G) - \{y\} - N_G(x)$.
Then $xw\in E(H_1)$, $\deg_{H_1}(x) = \deg_{G}(x)$ is odd,
$\deg_{H_1}(w) = |V(G)| -|\{y,w\}| -|N_G(x)-\{w\}| $ is odd,
and $\deg_{H_1}(v) = 1$ for every $v \in V(G) - \{x,y,w\}$.

Let $H_2 = G - E(F) - E(H_1)$.
Then $\deg_{H_2}(x)=0$,
$\deg_{H_2}(y) = \deg_{G}(y)$ is odd,
$\deg_{H_2}(w) = \deg_{G}(w) - \deg_{H_1}(w)$ is odd,
and 
$\deg_{H_2}(v) = \deg_{G}(v) - \deg_{F}(v) - \deg_{H_1}(v)$ is odd
for every $v \in V(G) - \{x,y,w\}$.
Thus,
$G$ is decomposed into three odd subgraphs $F, H_1, H_2$. 
\qed

\subsection{Proof of Theorem~\ref{thm-4-conn}}

\noindent {\em Proof.} 
Let $G$ be a $4$-connected graph of odd order.
By Theorems \ref{thm-2} and \ref{thm-32},
we may assume that $G$ has exactly one vertex of even degree, say $w$,
and $w$ is not adjacent to all other vertices in $G$.
We focus on proving the following;
\begin{quote}
$(*)$ $G - w$ has
two subgraphs $F_1$ and $F_2$
such that $|V(F_1)| \not\equiv |V(F_2)| \pmod{2}$,
and for each $i=1,2$,  
$N_G(w) \subseteq V(F_i)$, 
each component of $F_i$ contains a vertex in $N_{G}(w)$
and for each vertex $v$ in $F_i$,
$\deg_{F_i}(v)$ is odd if and only if $v \in N_{G}(w)$.
\end{quote}

In fact, if we have such subgraphs $F_1$ and $F_2$,
then we obtain an odd $3$-edge-coloring as follows:
Since the parities of $|V(F_1)|$ and $|V(F_2)|$ are different,
we may assume that $|V(F_1)|$ is odd.
Let $\widetilde{F}$ be the graph induced by 
$E(F_1) \cup E_G(w)$.
By the condition on $F_1$, $\widetilde{F}$ is a connected Eulerian subgraph of even order. 
Thus $\widetilde{F}$ has an odd factor $H_1$.
Since $\widetilde{F}$ is an Eulerian graph,
$\widetilde{F} - E(H_1)$ is also an odd factor of $\widetilde{F}$.
In addition, $G - E(\widetilde{F})$ is a subgraph of $G$
in which each vertex except for $w$ has an odd degree while $w$ has degree $0$.
Thus, $G$ has an odd $3$-edge-coloring such that each of $H_1$, $\widetilde{F} - E(H_1)$ and $G - E(\widetilde{F})$
forms its color class.
\medskip

Therefore, it suffices to show $(*)$.
Let $u$ be a vertex in $G$ such that $u \neq w$ and $u$ is not adjacent to $w$.
Since $G-w$ is $3$-connected,
applying Theorem~\ref{Tutte_lemma} to $G - w$ with $e$ being an edge incident to $u$
and an arbitrary vertex $v$,
there exists a chordless cycle $C$ in $(G - w) - v$ such that 
$u$ is contained in $C$ and $(G - w) - V (C)$ is connected.

For a vertex set $S$ of even number vertices of $C$,
$C$ is decomposed into two subgraphs $P_S^1$ and $P_S^2$ of $C$ such that 
for each $i = 1,2$,  $\deg_{P_S^i}(v_1)=1$ for $v_1 \in S$ and $\deg_{P_S^i}(v_2)\in \{0,2\}$ for $v_2\in V(C)-S$.
In particular, $V(P_S^1) \cap V(P_S^2) = S$ and $V(P_S^1) \cup V(P_S^2) = V(C)$.

Let $G' = (G - w) - V(C)$,
$N_{C}(w) = N_G(w) \cap V(C)$,
and $N_{G'}(w) = N_G(w) \cap V(G')$.
Then $|N_G(w)| = |N_C(w)| + |N_{G'}(w)|$, which is even, and 
$u \in V(C) - N_C(w)$.
We consider four cases according to the parities of $|V(C)|$ and $|N_C(w)|$.
Those cases employ similar ideas, but the details are different.
Thus, we need to consider them separately.

\medskip \noindent
\textbf{Case 1} ~{\em Both $|N_{C}(w)|$ and $|V(C)|$ are odd.}
\medskip 

In this case, note that $|N_{G'}(w)|$ is odd and $N_C(w) \neq \emptyset$.
Since $C$ is chordless and $G$ is $3$-connected,
there exists a vertex $x$ in $G'$ that is adjacent to $u$ in $G$.
Then,
$|N_{G'}(w) \triangle \{x\}|$ is even.
By Lemma \ref{sp_odd_sub_lemma} with $S=N_{G'}(w)\triangle\{x\}$,
$G'$ has a spanning subgraph $F'$ such that
each non-trivial component of $F'$ contains a vertex in $N_{G'}(w) \triangle \{x\}$
and for each vertex $v$ in $G'$,
$\deg_{F'}(v)$ is odd if and only if $v \in N_{G'}(w) \triangle \{x\}$.
Let $N_C' = N_C(w) \cup \{u\}$, where $|N_C'|$ is even.
For $i=1,2$,
let $F_i = F' \cup \{ux\} \cup P_{N_C'}^i$.
Since $|V(C)|$ is odd, 
$$|V(P_{N_C'}^1)| + |V(P_{N_C'}^2)| = |V(C)| + |N_C'|\equiv 1 \pmod{2}.
$$
Thus, $|V(F_1)| \not\equiv |V(F_2)| \pmod{2}$.
For $i=1,2$,
$N_G(w) \subseteq V(F_i)$ and
each non-trivial component of $F_i$ contains a vertex in $N_G(w)$
by the construction.
In addition, 
for each vertex $v$ in $F_i$,
$\deg_{F_i}(v)$ is odd if and only if 
$$v \in \left(N_{G'}(w) \triangle \{x\}\right) \triangle \{u,x\} \triangle N_C'
= N_{G'}(w) \triangle N_C(w) = N_G(w).
$$
Thus, $F_1$ and $F_2$ are the ones required in $(*)$.

\medskip \noindent
\textbf{Case 2} ~{\em $|N_{C}(w)|$ is odd and $|V(C)|$ is even.}
\medskip 

In this case, note that $|N_{G'}(w)|$ is odd and $N_C(w) \neq \emptyset$.
Let $u' \in N_C(w)$.
Since $G$ has minimum degree at least $4$ and $C$ is chordless,
there exists a vertex $x'$ adjacent to $u'$ in $G$ with $x' \neq w$.
Then,
$|N_{G'}(w) \triangle \{x'\}|$ is even.
By Lemma~\ref{sp_odd_sub_lemma} with $S=N_{G'}(w) \triangle \{x'\}$,
$G'$ has a spanning subgraph $F'$ such that
each non-trivial component of $F'$ contains a vertex in $N_{G'}(w) \triangle \{x'\}$
and
for each vertex $v$ in $G'$,
$\deg_{F'}(v)$ is odd if and only if $v \in N_{G'}(w) \triangle \{x'\}$.
Let $N_C' = N_C(w) - \{u'\}$, where $|N_C'|$ is even.
Note that $u'$ is contained in exactly one of $P_{N_C'}^1$ and $P_{N_C'}^2$,
say $u' \in V(P_{N_C'}^1)$ and $u' \notin V(P_{N_C'}^2)$ by symmetry.
For $i=1,2$,
let $F_i = (F' \cup \{u'x'\}) \cup P_{N_C'}^i$.
Note that $u' \in V(F_1) \cap V(F_2)$.
Since $|V(C)|$ is even, 
we have
$$ |V(F_1)\cap V(C)| + |V(F_2)\cap V(C)|= |V(C)| + |\{u'\}|+ |N_C'| \equiv 1 \pmod{2}. $$
Thus, $|V(F_1)| \not\equiv |V(F_2)| \pmod{2}$.

For $i=1,2$,
$N_G(w) \subseteq V(F_i)$,
each component of $F_i$ contains a vertex in $N_G(w)$,
and for each vertex $v$ in $F_i$,
$\deg_{F_i}(v)$ is odd if and only if 
$$v \in \left(N_{G'}(w) \triangle \{x'\}\right) \triangle \{u',x'\} \triangle (N_C(w) - \{u'\})
= N_{G'}(w) \triangle N_C(w) = N_G(w).
$$
Thus, $F_1$ and $F_2$ are the ones required in $(*)$.

\medskip \noindent
\textbf{Case 3} ~{\em $|N_{C}(w)|$ is even and $|V(C)|$ is odd.}
\medskip 

In this case, note that $|N_{G'}(w)|$ is even.
Thus, by Lemma \ref{sp_odd_sub_lemma} with $S = N_{G'}(w)$,
$G'$ has a spanning subgraph $F'$ such that
each non-trivial component of $F'$ contains a vertex in $N_{G'}(w)$
and
for each vertex $v$ in $G'$,
$\deg_{F'}(v)$ is odd if and only if $v \in N_{G'}(w)$.

Suppose first that $N_C(w) \neq \emptyset$.
Since $|V(C)|$ is odd, 
$|V(P_{N_C(w)}^1)| \not\equiv |V(P_{N_C(w)}^2)| \pmod{2}$. 
So, we can check that $F' \cup P_{N_C(w)}^1$ and $F' \cup P_{N_C(w)}^2$ are the ones required in $(*)$.
Thus, we may assume $N_C(w) = \emptyset$.

Since $G-w$ is $2$-connected,
there exists a path $Q$ connecting two vertices in $C$ and passing through a vertex in $F'$.
Let $F'^+$ be obtained from 
the subgraph of $G$ induced by $E(F') \triangle E(Q)$
by deleting all components that do not contain a vertex in $N_{G'}(w) =N_G(w)$.
Thus, each component of $F'^+$ contains a vertex in $N_G(w)$.
Let $U$ be the set of the end vertices of $Q$.
Note that for each vertex $v$ in $F'^+$,
$\deg_{F'^+}(v)$ is odd if and only if
$v \in N_{G}(w) \cup U$.
Since $|V(C)|$ is odd,
we have $|V(P_{U}^1)| \not\equiv |V(P_{U}^2)| \pmod{2}$. 
So, 
$F'^+ \cup P_{U}^1$ and $F'^+ \cup P_{U}^2$ are the ones required in $(*)$.

\medskip \noindent
\textbf{Case 4} ~{\em Both $|N_{C}(w)|$ and $|V(C)|$ are even.}
\medskip

In this case, note that $|N_{G'}(w)|$ is even.
Suppose first that $N_C(w) \neq \emptyset$.
Let $u' \in N_C(w)$.
Since $G$ has minimum degree at least $4$ and $C$ is chordless,
there exist two distinct vertices $x$ and $x'$ in $G'$
such that $x'$ is adjacent to $u'$ in $G$ and $x$ is adjacent to $u$.
Then,
$|N_{G'} (w)\triangle \{x,x'\}|$ is even.
By Lemma \ref{sp_odd_sub_lemma} with $S=N_{G'}(w) \triangle \{x,x'\}$,
$G'$ has a spanning subgraph $F'$ such that
each non-trivial component of $F'$ contains a vertex in $N_{G'}(w) \triangle \{x,x'\}$
and
for each vertex $v$ in $G'$,
$\deg_{F'}(v)$ is odd if and only if $v \in N_{G'}(w) \triangle \{x,x'\}$.
Let $N_C' = (N_C(w) - \{u'\}) \cup \{u\}$, where $|N_C'|$ is even.
Note that $u'$ is contained in exactly one of $P_{N_C'}^1$ and $P_{N_C'}^2$,
say $u' \in V(P_{N_C'}^1)$ and $u' \notin V(P_{N_C'}^2)$ by symmetry.
Since $|V(C)|$ is even, 
we have $|V(P_{N_C'}^1)| \equiv |V(P_{N_C'}^2)| \pmod{2}$.
For $i=1,2$,
let $F_i = (F' \cup \{ux, u'x'\}) \cup P_{N_C'}^i$.
Note that 
$$|V(F_1)| = 
|V(F')| + |V(P_{N_C'}^1)| 
\not\equiv
|V(F')| + |V(P_{N_C'}^2)| + |\{u'\}|
= |V(F_2)| \pmod{2}.
$$
For each $i =1,2$,
$N_G(w) \subseteq V(F_i)$,
each component of $F_i$ contains a vertex in $N_G(w)$,
and for each vertex $v$ in $F_i$,
$\deg_{F_i}(v)$ is odd if and only if 
\begin{align*}
v \in & \left(N_{G'}(w) \triangle \{x,x'\}\right) \triangle \{u,x,u',x'\} \triangle \left( (N_C(w) - \{u'\}) \cup \{u\}\right) \\
& = N_{G'}(w) \triangle N_(w) = N_G(w).
\end{align*}
Thus, $F_1$ and $F_2$ are the ones required in $(*)$.

Thus, we may assume $N_C(w) = \emptyset$.
Note that $N_G(w) = N_{G'}(w)$.
Since $G$ has minimum degree at least $4$ and $C$ is chordless,
there are two vertices $x_1,x_2$ in $G'$ 
such that both $x_1$ and $x_2$ are adjacent to $u$ in $G$.
Then, $|N_{G'} \triangle \{x_1,x_2\}|$ is even.
By Lemma \ref{sp_odd_sub_lemma} with $S=N_{G'}(w) \triangle \{x_1,x_2\}$,
$G'$ has a spanning subgraph $F'$ such that
each component of $F'$ contains a vertex in $N_{G'}(w) \triangle \{x_1,x_2\}$
and
for each vertex $v$ in $G'$,
$\deg_{F'}(v)$ is odd if and only if $v \in N_{G'} \triangle \{x_1,x_2\}$.
Let $F_1' = F' \cup \{ux_1, ux_2\}$ and $F_2' =F_1' \cup C$,
both of which are subgraphs of $G - w$.
Since $|V(C)|$ is even, we have 
$$|V(F_2')| = |V(F_1')| + |V(C) - \{u\}| \not\equiv |V(F_1')| \pmod{2}.$$
By the condition on $F'$,
we see that $N_G(w) \subseteq V(F_i')$,
for each vertex $v$ in $G'$ and each $i=1,2$,
$\deg_{F_i'}(v)$ is odd if and only if 
$v \in \left(N_{G'} \triangle \{x_1,x_2\}\right) \triangle \{x_1,x_2\}
= N_G(w)$.
Therefore,
if each component of $F_i'$ contains a vertex in $N_G(w)$,
then $F_1'$ and $F_2'$ are the ones required in $(*)$
and we are done.

Since each component of $F'$ contains a vertex in $N_{G'}(w) \triangle \{x_1,x_2\}$,
it follows from the handshaking lemma that 
the exceptional case occurs only when 
the component of $F'$ that contains $x_1$
also contains $x_2$ while no vertices in $N_G(w)$.
(In particular, $x_1,x_2 \notin N_{G}(w)$.)
So, we now focus on this case.

Let $F''$ be the subgraph of $G'$ obtained from $F'$ by removing 
the component containing $x_1$ and $x_2$.
Note that each component of $F''$ 
contains a vertex in $N_{G}(w)$,
and for each vertex $v$ in $G'$,
$\deg_{F''}(v)$ is odd if and only if $v \in N_{G}(w)$.

Taking a suitable path in $F'$ connecting $x_1$ and $x_2$,
together with the edges $ux_1, ux_2$,
we obtain a cycle $D'$ with $V(D') \cap N_G(w) = \emptyset$.
Note that $D'$ shares only $u$ with $C$.
If $D'$ contains no chord,
then let $D = D'$;
Otherwise,
by suitable modification, we obtain a chordless cycle $D$
such that $V(D) \subseteq V(D')$
and 
$D$ shares only $u$ with $C$.

If $|V(D)|$ is odd,
then since $D$ contains no vertex in $N_{G}(w)$,
we can apply the argument in Case 3 replacing $C$ with $D$. 
Thus, we may assume that $|V(D)|$ is even.

Let $A$ be a component of $F''$.
Since $G-w$ is $3$-connected,
there exists three pairwise vertex-disjoint paths $Q_1, Q_2, Q_3$
between $A$ and $C \cup D$.
We now assume that at least two paths,
say $Q_1$ and $Q_2$,
have end vertices in $C$,
but the other case can be shown similarly.
For $i=1,2$, let $u_i$ be the end vertex of $Q_i$ in $C$.
By combining $Q_1, Q_2$ and a path in $A$,
we obtain a path $Q$ connecting $u_1$ and $u_2$ through a vertex in $F''$.

Let $F''^+$ be obtained from the subgraph of $G$ induced by $E(F'') \triangle E(Q)$
by deleting all components that do not contain a vertex in $N_{G}(w)$.
Thus, each component of $F''^+$ contains a vertex in $N_G(w)$.
For $U = \{u_1,u_2\}$, 
we may assume that $P_{U}^1$ contains $u$. 
Let $F_1'' = F''^+ \cup P_{U}^1$,
and let $F_2'' = F_1'' \cup D$,
both of which are subgraphs of $G-w$.
Since $|V(D)|$ is even,
we have
$$|V(F_2'')| = |V(F_1'')| + |V(D) - \{u\}| \not\equiv |V(F_1'')| \pmod{2}.$$
By the condition on $F''$,
we see that $N_G(w) \subseteq V(F_i'')$,
for each vertex $v$ in $G'$ and each $i=1,2$,
$\deg_{F_i''}(v)$ is odd if and only if 
$v \in N_{G}(w)$.
Therefore,
$F_1''$ and $F_2''$ are the ones required in $(*)$.

Then, in all of the four cases,
we proved $(*)$,
which completes the proof of Theorem \ref{thm-4-conn}.
\qed
\\

\section{Toward Conjecture \ref{conj-3-conn}}
In this paper,
we show that every $4$-connected graph $G$ of odd order satisfies $\chi'_o(G) \leq 3$,
and posed Conjecture \ref{conj-3-conn}.
Toward this conjecture,
it is worthwhile to point out that 
for the case of $3$-connected graphs with minimum degree at least $4$,
the statement $(*)$ in the proof of Theorem \ref{thm-4-conn}
does not generally hold.

Let $H$ be a $3$-connected Eulerian graph of even order (e.g.~$K_6$ with perfect matching removed),
and let $uv$ be an edge in $H$.
Prepare four copies of $H$, say $H_1, H_2, H_3, H_4$,
where $u_iv_i$ is the edge in $H_i$ corresponding to $uv$ for $i=1,2,3,4$.
Let $G$ be the graph obtained from $H_i - u_iv_i$ for $i = 1,2,3,4$
and three new vertices $w, x_1, x_2$
by adding the following edges:
$$
x_1x_2, x_1u_i, x_2v_i \ (i = 1,2,3,4) ,
\text{ and } wz \text{ for any } z \in \bigcup_{i=1}^4 V(H_i).
$$
Note that $G$ is $3$-connected,
minimum degree is at least $4$,
and all vertices except for $w$ has an odd degree in $G$,
while $w$ has an even degree.

We show that $G-w$ does not contain a subgraph $F$
such that $|V(F)|$ is odd, 
$N_G(w) \subseteq V(F)$,
each component of $F$ contains a vertex in $N_G(w)$
and for each vertex $v$ in $F$,
$\deg_{F}(v)$ is odd if and only if $v \in N_{G}(w)$.
Suppose that such $F$ exists in $G-w$.
Since $|V(F)|$ is odd and $F$ contains all vertices in $N_G(w)$,
exactly one of $x_1$ and $x_2$ is contained in $F$,
say $x_1 \in V(F)$ and $x_2 \notin V(F)$.
Since $x_1$ belongs to a component containing a vertex in $N_G(w)$,
$x_1$ has to be incident to an edge in $F$,
say $x_1u_1 \in E(F)$.
Since $|V(H_1)|$ is even,
it follows from the handshaking lemma that 
there exists a vertex in $H_1$ 
that has an even degree in $F$, a contradiction.
\\

Therefore, the idea in the proof of Theorem \ref{thm-4-conn} does not work
for the case of $3$-connected graphs with minimum degree at least $4$.

Related to Conjecture \ref{conj-3-conn},
we introduce the following conjecture, 
which was asked by Yokoi \cite{Yokoi}.

\begin{conjecture}
There exists a constant $d$ satisfying the following statement:
For every $2$-connected graph $G$ of odd order with minimum degree at least $d$,
$G$ satisfies $\chi'_o(G) \leq 3$.
\label{conj-Yokoi}
\end{conjecture}

This is also open.
We point out that the similar statement for connected graphs does not hold,
since by the construction due to M\'{a}trai \cite{Matrai-2006},
we can show that 
for any constant $d$,
there exists a connected graph $G$ of odd order
with minimum degree at least $d$
such that $\chi'_o(G) = 4$.

\bigskip \noindent
{\bf Acknowledgments}
The authors thank the anonymous referee for pointing out,
in an earlier version of this manuscript,
that Theorem \ref{thm-32} holds for connected graphs.
The second author was supported by JSPS KAKENHI Grant Number JP22K13956 and JSPS KAKENHI Grant Number JP25K17301.
The third author was supported by JSPS KAKENHI Grant Numbers 22K19773, 23K03195 and 26K00616.  This work was supported by the Research Institute for Mathematical Sciences, an International Joint Usage/Research Center located in Kyoto University.

\end{document}